\documentclass[]{article}
\usepackage{amssymb,amsmath,amsthm,bm,citesort,setspace,mathrsfs,nicefrac}
\usepackage{url,color}
\usepackage{pdfsync}
\input cyracc.def

\newcommand{\N}{\mathbf{N}}
\newcommand{\Z}{\mathbf{Z}}

\newcommand{\R}{\mathbf{R}}

\newcommand{\be}{\begin{equation}}
\newcommand{\ee}{\end{equation}}

\newcommand{\lip}{\text{\rm Lip}_\sigma }

\renewcommand{\P}{\mathrm{P}}
\newcommand{\E}{\mathrm{E}}

\newcommand{\1}{\boldsymbol{1}}
\renewcommand{\d}{{\rm d}}

\newcommand{\e}{{\rm e}}
\renewcommand{\geq}{\geqslant}
\renewcommand{\leq}{\leqslant}
\renewcommand{\ge}{\geqslant}
\renewcommand{\le}{\leqslant}

\author{Daniel Conus\\Lehigh University
\and Mathew Joseph\\University of Utah
\and Davar Khoshnevisan\\University of Utah
\and  Shang-Yuan Shiu\\Academica Sinica}
\title{Initial measures for the stochastic heat equation\thanks{%
	Research supported in part by 
	the NSFs grant DMS-0747758 (M.J.) and DMS-1006903 (D.K.).}}
\date{October 18, 2011}
\newtheorem{stat}{Statement}[section]
\newtheorem{proposition}[stat]{Proposition}
\newtheorem{corollary}[stat]{Corollary}
\newtheorem{theorem}[stat]{Theorem}
\newtheorem{lemma}[stat]{Lemma}
\theoremstyle{definition}

\newtheorem{example}[stat]{Example}

\numberwithin{equation}{section}
\begin{document}
\maketitle
\begin{abstract}
	We consider a family of nonlinear stochastic heat equations
	of the form $\partial_t u=\mathcal{L}u + \sigma(u)\dot{W}$,
	where $\dot{W}$ denotes space-time white noise,
	$\mathcal{L}$ the generator of a symmetric L\'evy process on $\R$,
	and $\sigma$ is Lipschitz continuous and zero at 0. We show that
	this stochastic PDE has a random-field solution for every finite initial
	measure $u_0$. Tight a priori bounds on the moments of the solution
	are also obtained.
	
	In the particular case that $\mathcal{L}f
	=cf''$ for some $c>0$, we prove that if $u_0$ is a finite measure
	of compact support, then the solution is with probability
	one a bounded function for all times $t>0$.\\
	
	\noindent{\it Keywords:} 
	The stochastic heat equation, singular initial data.\\

	\noindent{\it \noindent AMS 2000 subject classification:}
	Primary 60H15; Secondary: 35R60.
\end{abstract}

\section{Introduction}
Consider the stochastic heat equation
\begin{equation}\label{heat0}
	\frac{\partial}{\partial t} u_t(x) = \frac{\varkappa}{2}
	\frac{\partial^2}{\partial x^2} u_t(x) + \sigma\left(
	u_t(x)\right)\dot{W}_t(x),
\end{equation}
where $\sigma:\R\to\R$ is a Lipschitz function that satisfies
\begin{equation}
	\sigma(0)=0,
\end{equation}
 $\dot{W}$ denotes space-time white noise. The constant
$\varkappa/2$ is a viscosity constant, and is assumed to
be strictly positive throughtout. It is well known that \eqref{heat0} has a
random-field solution if, for example, $u_0$ is a bounded
and measurable function.

Now suppose that $u_0:\R\to\R_+$ is in fact bounded uniformly away from
zero, as well as infinity. We have shown recently \cite{CJK}
that $x\mapsto u_t(x)$ is a.s.\ unbounded for all $t>0$
under various conditions on $\sigma$. In particular, if
$\sigma(x)=cx$ for a constant $c>0$---this is the so called
\emph{parabolic Anderson model} \cite{CM94}---then
our results \cite{CJK} implies that with probability one,
\begin{equation}\label{chaos}
	0<\limsup_{|x|\to\infty} \frac{\log u_t(x)}{(\log |x|)^{\nicefrac23}}
	<\infty.
\end{equation}
For instance, \eqref{chaos} holds for the parabolic Anderson model
in the physically-relevant case that
$u_0(x)\equiv 1$. Another physically-relevant case is when
$u_0=\delta_0$ is point mass at 0 and we continue to consider the parabolic
Anderson model; this case arises in the study of directed random
polymers (see Kardar \cite{Kardar}). 
G. Ben Arous, I. Corwin and J. Quastel have independently asked us whether \eqref{chaos}
continues to hold in that case (private communications). One of the goals
of the present articles is to prove that the answer to this question is ``no.''
In fact, we have the following much more general fact, which
is a corollary to the development of this paper.

\begin{theorem}\label{th:NoChaos}
	If $\sigma(0)=0$ and $u_0$ is a finite measure of compact support, then
	$\sup_{x\in\R} u_t(x)=\sup_{x\in\R} |u_t(x)|<\infty$ a.s.\ for all $t>0$.
\end{theorem}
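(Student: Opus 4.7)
The plan is to split $u_t = I_t + N_t$ via the mild formulation
\[
u_t(x) = (p_t\ast u_0)(x) + \int_0^t\!\!\int_\R p_{t-s}(x-y)\,\sigma(u_s(y))\,W(ds\,dy),
\]
where $p_t$ denotes the heat kernel for $\frac{\varkappa}{2}\partial_x^2$, and to bound the two pieces separately. The deterministic term $I_t(x) := (p_t\ast u_0)(x)$ is immediately controlled: since $u_0$ is a finite measure supported in some $[-R,R]$, we have $\sup_x I_t(x) \le u_0(\R)/\sqrt{2\pi\varkappa t}$, with the Gaussian tail $I_t(x) \le C\exp\bigl(-(|x|-R)_+^2/(2\varkappa t)\bigr)$ as $|x|\to\infty$.

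For the stochastic term, the key ingredient is the a priori moment bound developed earlier in this paper. Combining it with Burkholder--Davis--Gundy and Minkowski, together with $|\sigma(u)| \le \lip |u|$ (which uses $\sigma(0)=0$), yields a Volterra-type inequality
\[
\bigl\|u_t(x)\bigr\|_k^2 \le C_1 I_t(x)^2 + C_2\int_0^t\!\!\int_\R p_{t-s}(x-y)^2\,\bigl\|u_s(y)\bigr\|_k^2\,dy\,ds.
\]
I would iterate this against the Gaussian tail of $I_t$, using $p_{t-s}(x-y)^2 = (4\pi\varkappa(t-s))^{-1/2}\,p_{(t-s)/2}(x-y)$, to conclude that for each $k$ the map $x \mapsto \|u_t(x)\|_k$ decays at infinity faster than any fixed negative power of $|x|$.

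The proof is then completed by a two-scale Borel--Cantelli argument. Taking $k$ sufficiently large, Chebyshev applied to the decaying moments gives $\sum_{n \in \Z}\P(|u_t(n)| > 1) < \infty$, so $\sup_{n\in\Z}|u_t(n)| < \infty$ almost surely. The oscillation over each unit interval $[n,n+1]$ is controlled via Kolmogorov's continuity criterion applied to the standard space-increment bound $\|u_t(x)-u_t(y)\|_k \le C_k |x-y|^{(1-\delta)/2}$, whose constants depend on $u$ only through the $L^k$ norms we have just shown to decay. A further Borel--Cantelli over $n$ bounds these moduli of continuity uniformly in $n$, whence $\sup_{x\in\R} u_t(x) < \infty$ almost surely.

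The main obstacle I anticipate is the iteration step: because $p_{t-s}^2$ is not a probability kernel, a crude Gronwall argument loses the Gaussian decay rate in $x$. The cleanest route is probably to multiply the Volterra inequality by an exponential weight $e^{\lambda x^2}$ for a small $\lambda > 0$ and show that $\sup_x e^{\lambda x^2}\|u_t(x)\|_k^2$ is finite, so that the desired decay of moments closes on itself.
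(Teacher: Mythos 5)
Your overall strategy (moment decay at infinity plus Kolmogorov continuity plus Borel--Cantelli over a lattice) is the same high-level route the paper follows. But the ``cleanest route'' you propose to carry out the crucial decay step has a concrete flaw, and you also skip an ingredient the theorem statement silently requires.

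\emph{The Gaussian weight does not close the iteration.} You want to show $\sup_x e^{\lambda x^2}\|u_t(x)\|_k^2<\infty$ by feeding the weight into the Volterra inequality. The weighted kernel is
\[
\int_{\R} p_s(x-y)^2\,e^{\lambda(x^2-y^2)}\,\d y
= \frac{1}{2\pi\varkappa s}\int_{\R} e^{-(\frac{1}{\varkappa s}+\lambda)z^2-2\lambda x z}\,\d z
= \frac{1}{2\pi\varkappa s}\sqrt{\frac{\pi}{\frac{1}{\varkappa s}+\lambda}}\;
e^{\lambda^2 x^2/(\frac{1}{\varkappa s}+\lambda)},
\]
which grows like $e^{cx^2}$ and is therefore not bounded in $x$, for any $\lambda>0$. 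So the quantity $\sup_x e^{\lambda x^2}\|u_t(x)\|_k^2$ cannot be propagated by a Gronwall argument. The iteration would close with an \emph{exponential} weight $e^{\mu|x|}$, since $\int p_s(z)^2e^{\mu|z|}\,\d z \lesssim s^{-1/2}$, and exponential decay of moments is still more than enough for your Borel--Cantelli step. As written, though, your fix for your self-identified ``main obstacle'' does not work.

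\emph{Positivity is missing and is doing real work.} The conclusion $\sup_x u_t(x)=\sup_x|u_t(x)|$ requires $u_t\ge 0$ a.s., which in turn requires $\sigma(0)=0$ and a comparison-principle argument (Proposition~\ref{pr:positivity} in the paper). Nowhere in your outline do you establish or use nonnegativity. The paper, in fact, uses positivity twice: once to justify the equality of suprema, and once (more importantly) in the moment-decay step. Since the stochastic integral in \eqref{eq:mild} has mean zero, positivity gives the \emph{exact} identity $\E|u_t(x)|=\E\,u_t(x)=(p_t*u_0)(x)$, which decays like $e^{-x^2/(4t)}$ for compactly supported $u_0$. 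The paper then interpolates: for $c>0$,
\[
\E|u_t(x)|^k \le c^k+\bigl(\E|u_t(x)|^{2k}\bigr)^{1/2}\,\P\{u_t(x)>c\}^{1/2},
\]
uses the uniform moment bound of Theorem~\ref{th:exist:unique} to control $\E|u_t(x)|^{2k}$, bounds $\P\{u_t(x)>c\}$ by Markov against the Gaussian first moment, and optimizes in $c$ to get Gaussian tail decay for \emph{all} moments. This avoids the weighted Gronwall iteration entirely and is more robust; it is the reason the theorem is stated with $\sigma(0)=0$ as a hypothesis. You should either adopt this interpolation step or replace your Gaussian weight by an exponential one, and in either case you need the positivity proposition before you can assert $\sup u_t=\sup|u_t|$.

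Your treatment of the modulus-of-continuity half of the argument is essentially the paper's (Kolmogorov continuity on unit intervals plus a second Borel--Cantelli), and is fine modulo constants.
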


\section{Some background material}

We begin by recalling some well-known facts; also, we use this
opportunity to  set forth some notation that will be used
consistently in the sequel.

\subsection{White noise}
Throughout let $W:=\{W_t(x)\}_{t\ge 0,x\in\R}$
denote a two-parameter Brownian sheet indexed by
$\R_+\times\R$; that is, $W$ 
is a two-parameter mean-zero Gaussian process with covariance
\begin{equation}	
	\textnormal{Cov}\left( W_t(x)\,, W_s(y) \right) =
	\min(s\,,t)\min(|x|\,,|y|)\1_{(0,\infty)}(xy),
\end{equation}
for all $s,t\ge 0$ and $x,y\in\R$. The space-time mixed
derivative of $W_t(x)$ is denoted by $\dot{W}_t(x) :=
\partial^2 W_t(x)/(\partial t\,\partial x)$ and called
\emph{space-time white noise}. Space-time white noise is a generalized
Gaussian random field with mean zero and covariance measure
\begin{equation}
	\E\left[ \dot{W}_t(x)\dot{W}_s(y)\right] =\delta_0(t-s)
	\delta_0(x-y).
\end{equation}

\subsection{L\'evy processes}
Let $X:=\{X_t\}_{t\geq 0}$ denote a symmetric L\'evy process on $\R$.
That is, $t\mapsto X_t$ is [almost surely] a right-continuous
random function with left limits at every $t>0$ whose increments
are independent, identically distributed and symmetric. It is well known
that $X$ is a strong Markov process; see Jacob  \cite{Jacob}
for this and all of the analytic theory of L\'evy processes that we will
require here and throughout. We denote the
infinitesimal generator of $X$ by $\mathcal{L}$. According to the
L\'evy--Khintchine formula, the law of the process
$X$ is characterized by its \emph{characteristic exponent}; that is a function
$\Psi:\R\to\mathbf{C}$ that is determined via the identity
$\E\exp(i\xi\cdot X_t)=\exp(-t\Psi(\xi))$, valid
for all $t\geq 0$ and $\xi\in\R$. Elementary arguments show
that, because $X$ is assumed to be symmetric, the characteristic
exponent $\Psi$ is a nonnegative---in particular real valued---symmetric function. 
For reasons that will become apparent later on, we will be interested
only in L\'evy processes that satisfy the following:
\begin{equation}\label{eq:TD}
	\int_{-\infty}^\infty \e^{-t\Psi(\xi)}\,\d\xi<\infty
	\qquad\text{for all $t>0$}.
\end{equation}
In such a case, the inversion formula for Fourier transforms applies
and tells us that $X$ has transition densities $p_t(x)$ that can be defined by
\begin{equation}\label{p}
	p_t(x) := \frac{1}{2\pi} \int_{-\infty}^\infty
	\e^{-ix\cdot \xi - t\Psi(\xi)}\,\d \xi
	\qquad(t>0~,~x\in\R).
\end{equation}
Note that the function $(t\,,x)\mapsto p_t(x)$ is  continuous
uniformly on $(\eta\,,\infty)\times\R$ for every $\eta>0$. 

Let us note two important consequences of the preceding 
formula for transition densities:
\begin{enumerate}
	\item $p_t(x)\leq p_t(0)$ for all $t>0$ and $x\in\R$; and
	\item $t\mapsto p_t(0)$ is nonincreasing.
\end{enumerate}
We will appeal to the these properties without 
further mention.

Throughout we assume also that the transition densities of the
L\'evy process $X$ satisfy the following regularity condition:
\begin{equation}\label{cond:k}
	\Theta  := \sup_{t>0} \left[\frac{p_{t/2}(0)}{p_t(0)}
	\right] <\infty.
\end{equation}
Because $p_t(0)\ge p_t(x)$ and $\int_{-\infty}^\infty p_t(x)\,\d x=1$,
it follows $p_t(0)>0$ and hence $\Theta$ is well defined [though it could in principle
be infinity when $X$ is a general symmetric L\'evy process]. 

Let us mention one example very quickly before we move on.

\begin{example}
	Let $X$ denote a one-dimensional standard Brownian motion. Then,
	$X$ is a symmetric L\'evy process with transition densities
	given by $p_t(x) := (2\pi t)^{-\nicefrac 12}\exp\{ -x^2/(2t)\}$
	for $t>0$ and $x\in\R$. In this case, we may note also that
	$\mathcal{L}f=(\nicefrac12)f''$, $\Psi(\xi)=\|\xi\|^2/2$, and
	$\Theta=\sqrt 2.$\qed
\end{example}

\section{The Main result}
Our main goal is to study the nonlinear stochastic heat equation
\begin{equation}\label{heat}
	\frac{\partial}{\partial t} u_t(x) = (\mathcal{L}u_t)(x) +
	\sigma(u_t(x))\dot{W}_t(x)\qquad \text{for }
	t>0,\, x\in\R,
\end{equation}
where:
\begin{enumerate}
	\item $\mathcal{L}$ is the generator of our symmetric
		L\'evy process $\{X_t\}_{t \geq 0}$;
	\item $\sigma:\R\to\R$ is Lipschitz continuous with Lipschitz constant 
		${\rm Lip}_\sigma$; and 
	\item $\sigma(0)=0$. 
\end{enumerate}
As regards the initial data,
we will assume here and throughout that 
\begin{equation}
	\text{$u_0$ is a nonrandom, finite Borel measure on $\R$.}
\end{equation}

The best-studied special case of  the random heat equation
\eqref{heat} is when $\mathcal{L}f=\nu f''$ is
a constant multiple of the Laplacian. In that case,
Equation \eqref{heat} arises  for several reasons that include its connections to the
stochastic Burger's equation (see Gy\"ongy and Nualart \cite{GN}), the parabolic Anderson model (see Carmona and Molchanov \cite{CM94}) and the KPZ equation (see Kardar, Parisi and Zhang \cite{KPZ}).

One can think of the solution $u_t(x)$ to \eqref{heat} as
the expected density of particles, at place $x\in\R$ and time $t>0$,
for a system of interacting branching random walks in continuous time:
The particles move as independent L\'evy processes
on $\R$; and the particles move through an independent external random
environment that is space-time white noise $\dot{W}$. The mutual interactions 
of the particles occur through a possibly-nonlinear birthing mechanism $\sigma$.
The special case $\mathcal{L}f=\nu f''$ deals with the case that the mentioned
particles move as independent Brownian motions.

The most special example of \eqref{heat} 
is when $\sigma(x)\equiv 0$; that is
the linear heat [Kolmogrov] equation for $\mathcal{L}$, whose [weak] solution is
$u_t(x)=(p_t*u_0)(x)$. 
It is a simple exercise in Fourier analysis 
that when $\sigma(x)\equiv 0$, the solution to \eqref{heat}
exists, is unique, and is a bounded function
for all time $t>0$. Indeed,
\begin{equation}\label{bounded}
	(p_t * u_0)(x) = \int_{\R} p_t(y-x) u_0(\d y) 
	\leq p_t(0) \int_{\R} u_0(\d y) = p_t(0) u_0(\R) < \infty.
\end{equation}
Hence, $\sup_{x} (p_t * u_0)(x) \leq p_t(0) u_0(\R) < \infty$,
as was asserted.

Consider the case where the characteristic exponent $\Psi$ of our
L\'evy process $X$ satisfies the following condition: For some
[hence all] $\beta>0$,
\begin{equation}\label{Upsilon}
	\Upsilon(\beta) :=  \frac{1}{2\pi}\int_{-\infty}^\infty
	\frac{\d\xi}{\beta+2\Psi(\xi)} < \infty.
\end{equation}
It is well known that if, in addition, 
$u_0$ is a bounded and measurable \emph{function},
then \eqref{heat} has a solution that is a.s.\ unique among all possible
``natural'' candidates. This statement follows easily from the theory of
Dalang \cite{Dalang:99}, for instance. Moreover, Dalang's theory shows
also that \eqref{Upsilon} is necessary as well as sufficient for the existence
of a random-field solution to \eqref{heat} when $\sigma$ is a constant function.
This is why we assume \eqref{Upsilon} per force. 
The technical condition \eqref{eq:TD} in fact
follows as a consequence of \eqref{Upsilon}; 
see Foondun et al.\ \cite[Lemma 8.1]{FKN}.

Dalang's method proves also the following without any extra effort:
 
\begin{theorem}[Dalang \cite{Dalang:99}]\label{th:Dalang}
	Suppose $u_0$ is a random field, independent of the
	white noise $\dot{W}$, such that $\sup_{x\in\R}\E(|u_0(x)|^k)<\infty$
	for some $k\in[2\,,\infty)$.
	Then \eqref{heat} has a mild solution $\{u_t(x)\}_{t>0,x\in\R}$
	that solves the random integral equation
	\begin{equation}\label{eq:mild}
		u_t(x) = (p_t*u_0)(x) + \int_{(0,t)\times\R} p_{t-s}(y-x)
		\sigma(u_s(y))\, W(\d s\,\d y).
	\end{equation}
	Furthermore, $\{u_t(x)\}_{t>0,x\in\R}$ is a.s.-unique in the class of 
	all predictable random fields $\{v_t(x)\}_{t>0,x\in\R}$
	that satisfy:
	\begin{equation}
		\sup_{t\in(0,T)}\sup_{x\in\R}\E(|v_t(x)|^k)<\infty\qquad
		\text{for all $T>0$.}
	\end{equation}
	Finally, the random field $(t\,,x)\mapsto u_t(x)$
	is  continuous in probability.
\end{theorem}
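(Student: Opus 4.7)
\emph{Proof proposal for Theorem \ref{th:Dalang}.} The plan is to construct the solution via Picard iteration in a time-weighted $L^k(\P)$-norm, following the scheme of Walsh and Dalang. Start with $u^{(0)}_t(x) := (p_t*u_0)(x)$ and define inductively
\begin{equation*}
u^{(n+1)}_t(x) := (p_t*u_0)(x) + \int_{(0,t)\times\R} p_{t-s}(y-x)\,\sigma(u^{(n)}_s(y))\,W(\d s\,\d y).
\end{equation*}
Because $u_0$ is independent of $\dot{W}$, Jensen's inequality together with the hypothesis $\sup_x\E(|u_0(x)|^k)<\infty$ yields a finite constant $K_0$ such that $\|(p_t*u_0)(x)\|_k\leq K_0$ uniformly in $(t,x)$, where $\|\cdot\|_k$ denotes the $L^k(\P)$-norm.

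Next, apply the Burkholder--Davis--Gundy inequality for Walsh stochastic integrals, Minkowski's inequality, and the bound $|\sigma(z)|\leq \lip|z|$ (which uses $\sigma(0)=0$) to obtain
\begin{equation*}
\|u^{(n+1)}_t(x)\|_k^2 \leq 2 K_0^2 + C_k \lip^2 \int_0^t\int_\R p_{t-s}(y-x)^2 \,\|u^{(n)}_s(y)\|_k^2\,\d y\,\d s.
\end{equation*}
Define $N_\beta(v):=\sup_{t>0,\,x\in\R}\e^{-\beta t}\|v_t(x)\|_k$; taking the sup in $x$, multiplying by $\e^{-2\beta t}$, and using the semigroup identity $\int_\R p_r(z)^2\,\d z = p_{2r}(0)$, one deduces
\begin{equation*}
N_\beta(u^{(n+1)})^2 \leq 2 K_0^2 + C_k \lip^2 \,N_\beta(u^{(n)})^2 \int_0^\infty \e^{-2\beta r}\,p_{2r}(0)\,\d r.
\end{equation*}
By Plancherel's theorem applied to \eqref{p}, the last integral is a positive constant multiple of $\int_\R \d\xi/(\beta+\Psi(\xi))$, which under hypothesis \eqref{Upsilon} is finite and tends to $0$ as $\beta\to\infty$.

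Consequently, for $\beta$ large enough that $C_k\lip^2\int_0^\infty \e^{-2\beta r}p_{2r}(0)\,\d r<\tfrac12$, the Picard map is a strict contraction in the norm $N_\beta$, so $\{u^{(n)}\}$ is Cauchy and converges to a mild solution of \eqref{eq:mild}. Uniqueness in the prescribed moment class follows by applying the identical estimate to the difference of any two candidate solutions and invoking Gronwall. Continuity in probability follows by splitting $u_t(x)-u_s(y)$ into the deterministic increment $(p_t*u_0)(x)-(p_s*u_0)(y)$, continuous by the uniform continuity of $(t,x)\mapsto p_t(x)$ on $(\eta,\infty)\times\R$, plus a stochastic-integral increment whose $L^k(\P)$-norm vanishes as $(s,y)\to(t,x)$ by dominated convergence inside the Burkholder--Davis--Gundy estimate. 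The main obstacle is the clean kernel reduction in the moment estimate: it is precisely Plancherel applied to \eqref{p} that ties the relevant analytic quantity to \eqref{Upsilon}, which is why Dalang's condition is exactly the right hypothesis to make the iteration close.
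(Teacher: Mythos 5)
The paper gives no proof of this theorem: it cites Dalang \cite{Dalang:99} and explicitly remarks that ``we will not describe the proof, since all of the requisite ideas are already in the paper \cite{Dalang:99}.'' Your proposal reconstructs precisely that standard Dalang--Walsh Picard iteration and is, in its essentials, correct: the uniform bound $\|(p_t*u_0)(x)\|_k\le\sup_y\|u_0(y)\|_k$, the BDG--Minkowski reduction, the identity $\int_\R p_r^2=p_{2r}(0)$, and the Laplace--Plancherel computation $\int_0^\infty\e^{-2\beta r}p_{2r}(0)\,\d r=\Upsilon(2\beta)\to 0$ as $\beta\to\infty$, all match the argument one finds in \cite{Dalang:99} and \cite{FK}.

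Two points deserve tightening. First, the exponential-weight norm $N_\beta(v)=\sup_{t>0,x}\e^{-\beta t}\|v_t(x)\|_k$ is not automatically finite for a field in the stated uniqueness class, since $\sup_{t<T}\sup_x\E|v_t(x)|^k<\infty$ for all $T$ does not preclude super-exponential growth in $T$; so the contraction in $N_\beta$ only yields uniqueness within $\{v:N_\beta(v)<\infty\}$. The standard fix (and what ``invoking Gronwall'' effectively does) is to restrict the supremum to $t\in(0,T]$, i.e.\ use $N_\beta^{(T)}(v)=\sup_{t\le T,x}\e^{-\beta t}\|v_t(x)\|_k$, which is finite on the stated class; the same contraction gives $N_\beta^{(T)}(u-v)=0$ for each $T$, and one lets $T\to\infty$. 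Alternatively, one applies a singular-kernel Gronwall inequality to $h(t):=\sup_{s\le t}\sup_x\|u_s(x)-v_s(x)\|_k^2$, noting $p_{2r}(0)$ is locally integrable near $r=0$ precisely because $\Upsilon<\infty$. Second, you call $(p_t*u_0)(x)-(p_s*u_0)(y)$ a ``deterministic increment,'' but $u_0$ is a random field; it is merely independent of $\dot W$. Its $L^k(\P)$-continuity should be argued via $\|(p_t*u_0)(x)-(p_s*u_0)(y)\|_k\le\sup_z\|u_0(z)\|_k\cdot\|p_t-p_s(\cdot-(y-x))\|_{L^1(\R)}$, appealing to $L^1$-continuity of the semigroup and of translation, rather than to pointwise uniform continuity of $p$ alone. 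Neither issue changes the overall structure, which is faithful to the proof the paper refers to.
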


We will not describe the proof, since all of the requisite
ideas are already in the paper \cite{Dalang:99}. However, we mention that
the reference to ``predictable'' assumes tacitly that the Brownian filtration
of Walsh \cite{Walsh} has been augmented with the sigma-algebra generated
by the random field $\{u_0(x)\}_{x\in\R}$.  The mentioned stochastic
integrals are also as defined in \cite{Walsh}.
We will need the following
variation of a theorem of Foondun and Khoshnevisan \cite{FK} also:

\begin{theorem}[Foondun and Khoshnevisan \cite{FK}]\label{th:FK}
	Suppose $u_0$ is a random field, independent of the
	white noise $\dot{W}$, such that $\sup_{x\in\R}\E(|u_0(x)|^k)<\infty$
	for every $k\in [2\,,\infty)$.
	Then the mild solution $\{u_t(x)\}_{t>0,x\in\R}$ to \eqref{heat} satisfies
	the following: For all $\epsilon>0$ there exists a finite and positive constant
	$C_\epsilon$ such that for all $t>0$ and $k\in[2\,,\infty)$,
	\begin{equation}
		\sup_{x\in\R}\E(|u_t(x)|^k) \leq 
		C_\epsilon^k \e^{(1+\epsilon)\gamma(k)t},
	\end{equation}
	where $\gamma(k)$ is defined by:
	\begin{equation}\label{eq:gamma(k)}
		\gamma(k) :=\inf\left\{ \beta>0:\
		\Upsilon(2\beta/k) < \frac{1}{4k\lip^2}\right\}.
	\end{equation}
\end{theorem}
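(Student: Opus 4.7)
The plan is to carry out the Picard iteration of \cite{Dalang:99} in a weighted $L^k$-norm tuned to the form of $\gamma(k)$. Set $u^{(0)}_t(x):=(p_t*u_0)(x)$ and, for $n\ge 0$,
\[
  u^{(n+1)}_t(x) := (p_t*u_0)(x) + \int_{(0,t)\times\R} p_{t-s}(y-x)\,\sigma(u^{(n)}_s(y))\,W(\d s\,\d y).
\]
The first term is controlled in $L^k$ by $U_k := \sup_y (\E|u_0(y)|^k)^{1/k}$, finite by hypothesis, via Minkowski and $\int p_t=1$. For the stochastic term, I apply the Burkholder--Davis--Gundy inequality with the (nearly) optimal $L^k$-constant $z_k\le 2\sqrt{k}$, then Minkowski at level $k/2\ge 1$, together with $|\sigma(w)|\le\lip|w|$ (valid because $\sigma(0)=0$), to obtain
\[
  (\E|u^{(n+1)}_t(x)-(p_t*u_0)(x)|^k)^{2/k} \leq z_k^2\,\lip^2 \int_0^t\!\int_\R p_{t-s}(y-x)^2\,(\E|u^{(n)}_s(y)|^k)^{2/k}\,\d y\,\d s.
\]

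Next I introduce the weighted norm
\[
  \mathcal{N}_{\beta,k}(v) := \sup_{t>0,\,x\in\R} \e^{-\beta t/k}\,(\E|v_t(x)|^k)^{1/k},
\]
designed so that $\E|v_t(x)|^k \le \mathcal{N}_{\beta,k}(v)^k\,\e^{\beta t}$. The crucial ingredient is the Laplace-transform identity
\[
  \int_0^\infty \e^{-\lambda u}\!\int_\R p_u(y)^2\,\d y\,\d u = \frac{1}{2\pi}\int_\R \frac{\d\xi}{\lambda+2\Psi(\xi)} = \Upsilon(\lambda),
\]
which follows from Plancherel (the Fourier transform of $p_u$ is $\e^{-u\Psi}$) and Fubini. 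Plugging $(\E|u^{(n)}_s(y)|^k)^{1/k}\le\mathcal{N}_{\beta,k}(u^{(n)})\,\e^{\beta s/k}$ into the previous display, substituting $u=t-s$, and taking a square root yields the affine recursion
\[
  \mathcal{N}_{\beta,k}(u^{(n+1)}) \;\le\; U_k + z_k\,\lip\,\sqrt{\Upsilon(2\beta/k)}\;\mathcal{N}_{\beta,k}(u^{(n)}).
\]

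Whenever $\beta>\gamma(k)$ the multiplier $z_k\,\lip\,\sqrt{\Upsilon(2\beta/k)}$ is strictly less than $1$; indeed, with $z_k^2=4k$, the inequality $z_k^2\lip^2\Upsilon(2\beta/k)<1$ is exactly the one defining $\gamma(k)$. A geometric iteration then gives $\sup_n\mathcal{N}_{\beta,k}(u^{(n)})<\infty$; passing to the $L^k$-limit and invoking Theorem~\ref{th:Dalang} to identify the limit with the mild solution transfers the bound to $u_t$. Setting $\beta=(1+\epsilon)\gamma(k)$ then delivers $\sup_x\E|u_t(x)|^k\le C^k\,\e^{(1+\epsilon)\gamma(k)t}$.

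The principal obstacle is making the prefactor $C=C_\epsilon$ independent of $k$. This requires both a sharp BDG constant, so that $z_k^2\le 4k$ and the contraction threshold aligns exactly with the definition of $\gamma(k)$, and a uniform-in-$k$ lower bound on the slack $1-z_k\lip\sqrt{\Upsilon(2(1+\epsilon)\gamma(k)/k)}$, so that $U_k/(1-\cdots)$ can be absorbed into a single $C_\epsilon$. The underlying Picard/contraction scheme and the identification of the limit as the Dalang solution are by now standard; it is this constant-tracking that must be done with care.
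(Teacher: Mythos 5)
The paper does not prove this theorem: it is cited from Foondun and Khoshnevisan \cite{FK}, with the remark that the proof ``follows closely the ideas of the paper \cite{FK} without making novel alterations.'' Your sketch reproduces precisely that argument --- Picard iteration for the mild equation, the near-optimal Burkholder--Davis--Gundy bound $z_k\le 2\sqrt{k}$ followed by Minkowski's inequality at exponent $k/2\ge 1$, the exponentially weighted norm $\mathcal{N}_{\beta,k}$, the Plancherel identity $\int_0^\infty \e^{-\lambda s}\|p_s\|_{L^2(\R)}^2\,\d s=\Upsilon(\lambda)$, and the resulting affine recursion whose contraction threshold is exactly $4k\lip^2\,\Upsilon(2\beta/k)<1$, the inequality defining $\gamma(k)$. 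This is the standard route, and your chain of estimates is sound.

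You are also right that the one delicate point is the uniformity of $C_\epsilon$ over $k$. Your geometric bound produces a prefactor of the form $U_k/(1-r_k)$ with $r_k = z_k\lip\sqrt{\Upsilon(2(1+\epsilon)\gamma(k)/k)}$, and the stated conclusion needs both $\sup_k U_k<\infty$ and $\inf_k(1-r_k)>0$. The latter follows when $\Upsilon$ has power-law decay, as in the Brownian and $\alpha$-stable cases treated in this paper (there $r_k=(1+\epsilon)^{-(\alpha-1)/(2\alpha)}$ exactly, independently of $k$), and the former holds for instance when $u_0$ is bounded and deterministic. Under the literal hypotheses of the statement neither is automatic; you flag this honestly but do not close it. In mitigation, every application of this theorem within the present paper is at a fixed $k$, so the unresolved uniformity is immaterial to how the result is actually used.
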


Once again, we omit the proof, since it follows closely the ideas of
the paper \cite{FK} without making novel alterations.

The existence and uniqueness of the solution to \eqref{heat} under 
a measure-valued initial condition has been studied earlier
in various papers. For example, Bertini and Cancrini \cite{BC} 
obtain moment formulas for the parabolic Anderson model
[that is, $\sigma(x)=cx$] in the special case that 
$\mathcal{L}f = (\nicefrac{\varkappa}{2})f''$and $u_0=\delta_0$ 
is the Dirac point mass at zero. They also give sense to what a ``solution'' might mean.
Among many other things, a paper by Mueller \cite{Mueller:weak}
and also a recent work of Chen and Dalang \cite{LeC-D} imply
the fact that the Bertini--Cancrini solution can be arrived at by more direct
methods akin to those of Walsh \cite{Walsh}. 
Weak solutions to the fully-nonlinear equation
\eqref{heat} have been studied in Conus and Khoshnevisan \cite{Con_Kho}
as well.

We are now ready to state one of the main results of this paper. 

\begin{theorem}\label{th:exist:unique}
	If $\Theta <\infty$,
	then \eqref{heat} has a mild solution
	$u$ that satisfies the following for all real numbers $x\in\R$,
	$\epsilon, t>0$, and $k\in[2\,,\infty)$: There exists a positive and finite constant
	$C_\epsilon:=C_\epsilon(\Theta )$---depending 
	only on $\epsilon$ and  $\Theta $---such that
	\begin{equation}\label{eq:exist:unique}
		\E\left(|u_t(x)|^k\right)  \leq C_\epsilon^k
		\e^{(1+\epsilon)\gamma(k) t}\left\{ 1+ p_t(0)(p_t*u_0)(x)
		\right\}^{k/2},
	\end{equation}
	where
	\begin{equation}
		\gamma(k) :=\inf\left\{ \beta>0:\
		\Upsilon(2\beta/k) < \frac{1}{4k\lip^2}\right\}.
	\end{equation}
	Moreover, the solution is a.s.\ unique among all predictable
	random fields $v$ that solve \eqref{heat} and satisfy
	\begin{equation}
		\sup_{t>0}\sup_{x\in\R} \left[
		\frac{\E\left(|v_t(x)|^2\right)}{\e^{(1+\epsilon)
		\gamma(2) t}\left\{ 
		1\vee p_t(0)(p_t*u_0)(x) \right\} }\right] <\infty
		\qquad\text{for some $\epsilon>0$}.
	\end{equation}
\end{theorem}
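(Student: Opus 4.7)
I would construct the solution by a Picard iteration adapted to the measure-valued initial data, tracking an $(t,x)$-dependent weight $H(t,x):=1+p_t(0)(p_t * u_0)(x)$. Define $u^{(0)}_t(x):=(p_t * u_0)(x)$ and, recursively,
\be
u^{(n+1)}_t(x):=(p_t * u_0)(x) + \int_{(0,t)\times\R} p_{t-s}(y-x)\,\sigma(u^{(n)}_s(y))\,W(\mathrm{d}s\,\mathrm{d}y);
\ee
each iterate is well defined since $(p_t * u_0)(x)\leq p_t(0)u_0(\R)<\infty$ for $t>0$. The inductive target, with $\alpha:=2(1+\epsilon)\gamma(k)/k$, is $\|u^{(n)}_t(x)\|_k^2 \leq A_n^2\,e^{\alpha t}H(t,x)$ for a bounded sequence $(A_n)$. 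Burkholder--Davis--Gundy (with sharp constant $\sqrt{4k}$), Minkowski's integral inequality, and $|\sigma(z)|\leq\lip|z|$ yield the basic recursion
\be
\|u^{(n+1)}_t(x)\|_k^2 \leq 2(p_t * u_0)(x)^2 + 8k\lip^2\int_0^t\!\!\int_\R p_{t-s}(y-x)^2\,\|u^{(n)}_s(y)\|_k^2\,\mathrm{d}y\,\mathrm{d}s,
\ee
and the trivial bound $(p_t * u_0)(x)^2\leq u_0(\R)\cdot p_t(0)(p_t * u_0)(x)$ absorbs the deterministic term into a multiple of $H(t,x)$.

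Everything then reduces to the key \emph{kernel estimate}
\be
\int_0^t\!\!\int_\R p_{t-s}(y-x)^2\,H(s,y)\,e^{\alpha s}\,\mathrm{d}y\,\mathrm{d}s \leq c(\Theta)\,\Upsilon(\alpha)\,e^{\alpha t}\,H(t,x),
\ee
which I would establish by splitting $H(s,y)$ into its two pieces. For the ``$1$'' piece, Plancherel gives $\int_\R p_{t-s}(y-x)^2\,\mathrm{d}y=p_{2(t-s)}(0)$ and a change of variable produces $\int_0^t e^{\alpha s}p_{2(t-s)}(0)\,\mathrm{d}s \leq e^{\alpha t}\Upsilon(\alpha)$ directly. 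For the $p_s(0)(p_s * u_0)(y)$ piece, the bound $p_{t-s}(y-x)^2\leq p_{t-s}(0)\,p_{t-s}(y-x)$ and Chapman--Kolmogorov reduce the spatial integral to $p_{t-s}(0)(p_t * u_0)(x)$, leaving the one-dimensional integral $\int_0^t e^{\alpha s}p_s(0)p_{t-s}(0)\,\mathrm{d}s$; I would bound this by $c\Theta^2\,e^{\alpha t}\Upsilon(\alpha)\,p_t(0)$ by splitting at $t/2$ and using $p_r(0)\leq\Theta p_{2r}(0)$ on each half to convert the result back into an $\Upsilon$-type integral. This ``product of heat kernels at zero'' step is where the hypothesis $\Theta<\infty$ is decisive, and I expect it to be the main technical obstacle of the proof.

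With the kernel estimate in hand, the recursion becomes $A_{n+1}^2\leq b+qA_n^2$ with $q$ proportional to $k\lip^2 c(\Theta)\Upsilon(\alpha)$; since $\Upsilon$ is strictly decreasing and $\Upsilon(2\gamma(k)/k)=1/(4k\lip^2)$ by definition of $\gamma(k)$, the slack provided by the factor $(1+\epsilon)$ permits choosing $\alpha$ large enough to force $q<1$, the $\Theta$-dependence being absorbed into $C_\epsilon$. Thus $(A_n)$ stays bounded. A parallel estimate on $\|u^{(n+1)}_t(x)-u^{(n)}_t(x)\|_k^2$ shows the iterates are Cauchy in the weighted norm
\be
\|v\|_{k,\alpha}^2:=\sup_{t>0,\,x\in\R}\frac{\|v_t(x)\|_k^2}{e^{\alpha t}H(t,x)},
\ee
producing a predictable limit $u$ that solves \eqref{eq:mild} and satisfies \eqref{eq:exist:unique}. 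Uniqueness follows by applying the same kernel estimate with $k=2$ to the difference $w:=u-v$ of two solutions in the stated weighted class: the inequality $\|w\|_{2,\alpha}^2\leq q\|w\|_{2,\alpha}^2$ with $q<1$ then forces $\|w\|_{2,\alpha}=0$, i.e.\ $w\equiv 0$ a.s.
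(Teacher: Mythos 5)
Your strategy is genuinely different from the paper's. You propose a single-phase Picard iteration in the exponentially weighted norm $\|v\|_{k,\alpha}^2:=\sup_{t,x}\|v_t(x)\|_k^2/\bigl(e^{\alpha t}H(t,x)\bigr)$ with $H(t,x)=1+p_t(0)(p_t*u_0)(x)$, while the paper splits the construction into two phases: a Picard iteration only on a short, nonrandom time interval $(0,\mathfrak{T}_k]$ (where the measure-valued initial data is the only obstacle and no exponential weight is needed), followed by restarting the equation at time $\mathfrak{T}_k$ from the bounded random field $U_{\mathfrak{T}_k}$ and invoking the Foondun--Khoshnevisan bound (Theorem~\ref{th:FK}) to produce the exponential rate $e^{(1+\epsilon)\gamma(k)t}$ on the remaining time horizon. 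Your kernel estimate itself is correct: both the ``$1$'' piece and the $p_s(0)(p_s*u_0)(y)$ piece bound out as you describe, the latter after the Chapman--Kolmogorov reduction, the split at $t/2$, and two uses of $p_r(0)\le\Theta p_{2r}(0)$, giving a constant of the form $c(\Theta)\Upsilon(\alpha)$ with, say, $c(\Theta)=1+2\Theta^2$.

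The gap is in how you dispose of the $\Theta$-dependent constant. Your contraction parameter is $q=8k\,\mathrm{Lip}_\sigma^2\,c(\Theta)\,\Upsilon(\alpha)$, and at $\alpha=2\gamma(k)/k$ the definition of $\gamma(k)$ gives $\Upsilon(\alpha)\approx 1/(4k\,\mathrm{Lip}_\sigma^2)$, so $q\approx 2c(\Theta)\ge 6>1$. Since $\Upsilon$ is continuous and decreasing, you can only force $q<1$ by taking $\alpha$ strictly larger than $2\gamma(k)/k$ by a $\Theta$-dependent margin; concretely, in the stable case $\Psi(\xi)\propto|\xi|^a$ one needs $(1+\epsilon)>(2c(\Theta))^{a/(a-1)}$. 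The constant $c(\Theta)$ therefore enters the \emph{rate}, not just the prefactor, and cannot be ``absorbed into $C_\epsilon$'' as you claim. Consequently your argument proves the moment bound only for $\epsilon$ above a $\Theta$-dependent threshold, whereas Theorem~\ref{th:exist:unique} asserts it for every $\epsilon>0$. The paper avoids this precisely by decoupling the two sources of difficulty: the short-time singularity of $p_t*u_0$ is resolved on a bounded time window (where no exponential price is paid and the $\Theta$-dependent constant is harmless), and the exponential rate for long times is inherited from Theorem~\ref{th:FK}, whose constant is $\Theta$-independent because the initial data there is already bounded. To fix your proof, you would need to either (i) prove a sharpened kernel estimate in which the contraction constant tends to $8k\,\mathrm{Lip}_\sigma^2\,\Upsilon(\alpha)$ (without the extra $c(\Theta)$ factor) as $\alpha\to\infty$ -- which does not appear achievable, since the factor comes from $\int_0^t e^{-\alpha r}p_r(0)p_{t-r}(0)\,\d r$ and does not vanish in the limit -- or (ii) adopt a two-phase argument along the lines of the paper. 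Your uniqueness argument, by contrast, does go through, because enlarging $\epsilon$ only enlarges the uniqueness class.
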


From this we shall see that, in the particular case where 
$\mathcal{L}$ is a multiple of the Laplacian, the solution remains bounded 
for every finite time $t > 0$, as long as the finite initial measure $u_0$
has compact support. This verifies Theorem \ref{th:NoChaos}.

\section{An example}

Consider  \eqref{heat} where $\mathcal{L}=-\varkappa(-\Delta)^{\alpha/2}$
is the fractional Laplacian of index $\alpha\in(0\,,2]$,
where $\varkappa>0$ is a viscosity parameter. The operator $\mathcal{L}$ is
the generator of a symmetric stable-$\alpha$ L\'evy process
with $\Psi(\xi)\propto \varkappa|\xi|^\alpha$, where the
constant of proportionality does not depend on $(\varkappa\,,\xi)$.
It is possible
to check directly that $\Upsilon(1)<\infty$ if and only if $\alpha>1$.
Let us restrict attention to the case that $\alpha\in(1\,,2]$,
and recall that we consider only the case that $u_0$ is a finite measure.

A computation shows that $\Upsilon(\beta)\propto 
\varkappa^{-1/\alpha}\beta^{-(\alpha-1)/\alpha}$
uniformly for all $\beta>0$.
Moreover, $p_t(x)$ is a fundamental solution of the heat equation for
$\mathcal{L}$,
and $\Theta=2^{1/\alpha}$ since
$p_t(0)\propto (\varkappa t)^{-1/\alpha}$ uniformly for all $t>0$. 
Theorem \ref{th:exist:unique}
then tells us that \eqref{heat} has a unique mild solution which satisfies
the following for all real numbers $t>0$ and $k\in[2\,,\infty)$:
\begin{equation}
	\sup_{x\in\R}\E\left(|u_t(x)|^k\right) \leq C_1^k 
	\left( 1+ (\varkappa t)^{-1}\right)^{k/\alpha}\exp\left( C_2
	\frac{t k^{(2\alpha-1)/(\alpha-1)}}{\varkappa^{1/(\alpha-1)}}\right),
\end{equation}
where $C_1$ and $C_2$ are positive and finite constants that do not
depend on $(t\,,k\,,\varkappa)$. 
In other words, the large-$t$ behavior of
the $k$th moment of the solution is, as in \cite{FK}, the same as
it would be had $u_0$ been a bounded measurable function; 
that is,
\begin{equation}
	\limsup_{t\to\infty} \frac1t \log \sup_{x\in\R} 
	\|u_t(x)\|_k \leq \textnormal{const}
	\cdot \left(  k^\alpha / \varkappa \right)^{1/(\alpha-1)}.
\end{equation}
However, we also observe the small-$t$ estimate,
\begin{equation}\label{eq:small-t:stable}
	\limsup_{t\downarrow 0}\ t^{1/\alpha}
	\sup_{x\in\R}\|u_t(x)\|_k
	\leq  \textnormal{const}\cdot \varkappa^{-1/\alpha},
\end{equation}
which is a new property. Moreover, the preceding estimate is tight. Indeed,
it is not hard to see that 
\begin{equation}
	\|u_t(x)\|_k \ge \|u_t(x)\|_2 \geq (p_t*u_0)(x).
\end{equation}
Therefore, in the case that $u_0$ is a {\it positive definite} finite measure,
\begin{align}
	\sup_{x\in\R}\|u_t(x) \|_k &
		\geq (p_t*u_0)(0)	
		=\frac{1}{2\pi}\int_{-\infty}^\infty \e^{-\textnormal{const} 
		\cdot\varkappa t|\xi|^\alpha} \hat{u}_0(\xi)\,\d\xi\\\nonumber
	&= \frac{1}{2\pi (t\varkappa)^{1/\alpha}}
		\int_{-\infty}^\infty \e^{-\textnormal{const} 
		\cdot|\xi|^\alpha} \hat{u}_0\left( \frac{\xi}{(t\varkappa)^{1/\alpha}}
		\right) \,\d\xi.
\end{align}
The second inequality follows from applying Parseval's identity to
$(p_{t+\epsilon}*u_0)(x)$ and then letting $\epsilon\downarrow 0$
using Fatou's lemma. Another application of Fatou's lemma then shows that
\begin{equation}
	\liminf_{t\downarrow 0}\ t^{1/\alpha}
	\sup_{x\in\R}\|u_t(x)\|_k
	\geq  \textnormal{const}\cdot \varkappa^{-1/\alpha},
\end{equation}
as long as $u_0$ is a positive-definite finite  measure such that
$\varliminf_{\|z\|\to\infty}\hat{u}_0(z)>0$; that is,
the conclusion of the Riemann--Lebesgue
lemma does not apply to $u_0$. 
Thus, \eqref{eq:small-t:stable} is tight, as was claimed.
There are many examples of such measure $u_0$. For
instance, we can choose
$u_0=a\delta_0+\mu$,
where $a>0$
and $\mu$ is any given positive-definite finite Borel  measure on $\R$.

\section{Preliminaries}

\subsection{Some inequalities}

We recall from Foondun and Khoshnevsian \cite{FK} that
\begin{equation}
	\Upsilon(\beta) := 
	\frac{1}{2\pi}\int_{-\infty}^\infty
	\frac{\d\xi}{\beta+2\Psi(\xi)} = \int_0^\infty
	\e^{-\beta t} \|p_t\|_{L^2(\R)}^2\,\d t;
\end{equation}
This is merely a consequence of 
Plancherel's theorem. Because $X$ is symmetric we can describe $\Upsilon$
in terms of the resolvent of $X$. To this end define
\begin{equation}
	\overline{\Upsilon}(\beta) := \int_0^\infty \e^{-\beta t}
	p_t(0)\,\d t.
\end{equation}
This is the resolvent density, at zero, of the L\'evy process $X$.
Because of symmetry, 
\begin{equation}\label{eq:pL2}
	\| p_t \|_{L^2(\R)}^2 = (p_t*p_t)(0) = p_{2t}(0).
\end{equation}
Therefore,
\begin{equation}
	\Upsilon(\beta) =\frac12
	\overline{\Upsilon}(\beta/2).
\end{equation}
In particular, Dalang's condition (\cite[(26), thm. 2]{Dalang:99})  
$\Upsilon(1)<\infty$ is equivalent
to the condition that $\overline{\Upsilon}(\beta)<\infty$
for some, hence all, $\beta>0$.

We close this subsection with some convolution estimates.

\begin{lemma}\label{lem:pp}
	For all $t>0$,
	\begin{equation}
		p_t(0)\cdot\int_0^t p_r(0)\,\d r\leq 
		\int_0^t p_{t-s}(0)p_s(0)\,\d s \leq 2\Theta \cdot
		p_t(0)\cdot\int_0^t p_r(0)\,\d r.
	\end{equation}
\end{lemma}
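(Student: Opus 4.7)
The plan is to use only two ingredients: that $s\mapsto p_s(0)$ is nonincreasing, and the bound $p_{t/2}(0)\leq \Theta\, p_t(0)$ built into the definition of $\Theta$.

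For the \emph{lower bound}, note that for every $s\in[0,t]$ one has $t-s\in[0,t]$, so monotonicity of $r\mapsto p_r(0)$ yields $p_{t-s}(0)\ge p_t(0)$. Pulling this pointwise bound out of the integral immediately gives
\begin{equation*}
\int_0^t p_{t-s}(0)p_s(0)\,\d s\ge p_t(0)\int_0^t p_s(0)\,\d s,
\end{equation*}
which is the left-hand inequality (after renaming $s$ to $r$).

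For the \emph{upper bound}, split the integral at $t/2$. On $[0,t/2]$ we have $t-s\ge t/2$, so $p_{t-s}(0)\le p_{t/2}(0)\le \Theta\, p_t(0)$; on $[t/2,t]$ we have $s\ge t/2$, so $p_s(0)\le p_{t/2}(0)\le \Theta\, p_t(0)$. Bounding the ``peaked'' factor in each piece by $\Theta\, p_t(0)$ and using the change of variable $r=t-s$ in the second piece gives
\begin{equation*}
\int_0^t p_{t-s}(0)p_s(0)\,\d s\le \Theta\, p_t(0)\int_0^{t/2}p_s(0)\,\d s+\Theta\, p_t(0)\int_0^{t/2}p_r(0)\,\d r=2\Theta\, p_t(0)\int_0^{t/2}p_r(0)\,\d r.
\end{equation*}
Enlarging the remaining integral from $[0,t/2]$ to $[0,t]$ (using $p_r(0)\ge 0$) yields the right-hand inequality.

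There is no real obstacle here: the argument is entirely symbolic and uses only the two stated properties of $p_t(0)$. The only mild subtlety is to notice that one should split at $t/2$ (so that on each half one of the factors $p_{t-s}(0)$ or $p_s(0)$ can be dominated by $p_{t/2}(0)$, where the $\Theta$-hypothesis bites), rather than trying to bound both factors globally.
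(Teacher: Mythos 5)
Your proof is correct and follows essentially the same route as the paper: the lower bound comes from monotonicity of $r\mapsto p_r(0)$, and the upper bound from splitting the convolution at $t/2$ and bounding the ``younger'' factor on each half by $p_{t/2}(0)\le\Theta\,p_t(0)$. The only cosmetic difference is that you retain the intermediate $\int_0^{t/2}p_r(0)\,\d r$ before enlarging, whereas the paper passes directly to $\int_0^t$; the content is identical.
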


As it turns out, the preceding simple-looking result is the key to our analysis of
existence and uniqueness, because it tells us that
\begin{equation}
	\int_0^t \frac{p_{t-s}(0)p_s(0)}{p_t(0)}\,\d s \xrightarrow{\hspace*{.15in}} 0
	\qquad\text{as $t\downarrow 0$},
\end{equation}
at sharp rate $\int_0^t p_r(0)\,\d r$.

\begin{proof}
	The first inequality holds simply because $p_s(0)\geq p_t(0)$
	for all $s\in(0\,,t)$. For the second one, we split
	$\int_0^t p_{t-s}(0)p_s(0)\,\d s$ into two parts: $\int_0^{t/2}$
	and $\int_{t/2}^t$. Note that $p_{t-s}(0)\leq p_{t/2}(0)\le
	\Theta p_t(0)$ when $s\in(0\,,t/2)$; and
	$p_s(0)\leq p_{t/2}(0)\le\Theta  p_t(0)$ when $s\in(t/2\,,t)$.
	The lemma follows from these observations.
\end{proof}

Let $\circledast$  denote space-time convolution; that is,
\begin{equation}\label{circledast}
	(f\circledast g)_t(x) := \int_0^t \d s\int_{-\infty}^\infty
	\d y\ f_{t-s}(x-y) g_s(y),
\end{equation}
whenever $f,g:(0\,,\infty)\times\R\to\R_+$ are both
measurable.

\begin{lemma}\label{lem:star}
	For all $t>0$, $x\in\R$, and $n\geq 1$,
	\begin{equation}\label{eq:star}
		\left( \underbrace{p^2 \circledast \cdots \circledast p^2}\limits_{\text{%
		$n$ times}} \right)_t\hskip-.06in(x) \leq 
		\left(2\Theta 
		\int_0^t p_s(0)\,\d s\right)^{n-1}\cdot p_t(0)p_t(x).
	\end{equation}
\end{lemma}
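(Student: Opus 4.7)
The plan is to prove the inequality by induction on $n$. Writing $q^{(n)}_t(x)$ for the $n$-fold space-time convolution on the left-hand side, the base case $n=1$ reads $p_t(x)^2 \leq p_t(0)p_t(x)$, which is immediate from $p_t(x)\leq p_t(0)$.

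For the inductive step, assume the estimate holds for $n-1$ and write
\begin{equation*}
q^{(n)}_t(x) = \int_0^t \d s \int_{-\infty}^\infty \d y\ p_{t-s}(x-y)^2\, q^{(n-1)}_s(y).
\end{equation*}
Applying the induction hypothesis to $q^{(n-1)}_s(y)$, and using the trivial monotonicity $\int_0^s p_r(0)\,\d r \leq \int_0^t p_r(0)\,\d r$, we may pull out a factor $(2\Theta\int_0^t p_r(0)\,\d r)^{n-2}$ from under the integral sign and are left to estimate
\begin{equation*}
\int_0^t \d s \int_{-\infty}^\infty \d y\ p_{t-s}(x-y)^2\, p_s(0) p_s(y).
\end{equation*}

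To dispatch the remaining integral I would use the pointwise bound $p_{t-s}(x-y)^2 \leq p_{t-s}(0)\, p_{t-s}(x-y)$ to pull one factor of $p_{t-s}$ out and leave the other to pair up with $p_s(y)$. The $y$-integral is then exactly the Chapman--Kolmogorov convolution $(p_{t-s} * p_s)(x) = p_t(x)$, so what remains is
\begin{equation*}
p_t(x)\int_0^t p_{t-s}(0) p_s(0)\,\d s,
\end{equation*}
which, by Lemma \ref{lem:pp}, is dominated by $2\Theta\, p_t(0)\,p_t(x) \int_0^t p_r(0)\,\d r$. Reassembling with the inductive factor gives the claimed bound with exponent $n-1$.

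There is no real obstacle here; the whole argument is essentially a bookkeeping exercise around three facts: $p_t(x)\leq p_t(0)$ (used to split the square), Chapman--Kolmogorov (used to perform each spatial integration), and Lemma \ref{lem:pp} (used to regroup the time integrals so that exactly one factor of $p_t(0)$ survives at the end). The only subtle point worth flagging is that one must extract the $p(0)$-factor from the \emph{squared} heat kernel rather than from $q^{(n-1)}_s(y)$, since it is the square that prevents an immediate application of Chapman--Kolmogorov; once that splitting is made, the induction proceeds cleanly.
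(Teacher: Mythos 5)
Your proof is correct and follows essentially the same path as the paper's: induction on $n$, splitting one factor of the squared kernel via $p^2_r(z)\le p_r(0)p_r(z)$, integrating in $y$ by Chapman--Kolmogorov, and then invoking Lemma~\ref{lem:pp} to control $\int_0^t p_{t-s}(0)p_s(0)\,\d s$. The only cosmetic difference is the order in which you peel off the space-time convolution (you write $q^{(n)}=p^2\circledast q^{(n-1)}$ whereas the paper writes it as $q^{(m)}\circledast p^2$), which makes no difference by commutativity of $\circledast$.
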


\begin{proof}
	The result holds trivially when $n=1$. Let us suppose that 
	\eqref{eq:star}
	is valid for $n=m$; we prove that \eqref{eq:star} is
	valid also for $n=m+1$. Note that
	\begin{align}\nonumber
		\mathcal{T}_{m+1} &:=
			\left(\,
			\underbrace{p^2 \circledast \cdots \circledast p^2}\limits_{\text{%
			$m+1$ times}} \right)_t(x) \\
		&= \int_0^t\d s
			\int_{-\infty}^\infty\d y\ 
			\left( \underbrace{p^2 \circledast \cdots \circledast p^2}\limits_{\text{%
			$m$ times}} \right)_{t-s}\hskip-.22in(y)\ p_s^2(x-y)\\\nonumber
		&\leq \int_0^t \d s \left( 2\Theta 
			\int_0^{t-s} p_r(0)\,\d r
			\right)^{m-1} p_{t-s}(0)\,\int_{-\infty}^\infty \d y\ p_{t-s}(y) p_s^2(x-y)\\
			\nonumber
		&\leq \left( 2\Theta  \int_0^t p_r(0)\,\d r
			\right)^{m-1}\int_0^t \d s\ p_{t-s}(0)\,
			\int_{-\infty}^\infty \d y\ p_{t-s}(y) p_s^2(x-y).\nonumber
	\end{align}
	Since $p_s^2(x-y)\leq p_s(0)p_s(x-y)$, the Chapman--Kolmogorov equation
	implies that
	\begin{equation}
		\mathcal{T}_{m+1} \le
		\left( 2\Theta  \int_0^t p_r(0)\,\d r
		\right)^{m-1}\cdot \int_0^t p_{t-s}(0)p_s(0)\,\d s \cdot 
		p_t(x),
	\end{equation}
	and the result follows from this, Lemma \ref{lem:pp}, and
	induction.
\end{proof}

\begin{lemma}\label{lem:star2}
	For all $t>0$, $x\in\R$, and $n\geq 1$,
	\begin{equation}\begin{split}
		&
			\left(\left( \underbrace{p^2 
			\circledast \cdots \circledast p^2}\limits_{\text{%
			$n$ times}} \right) \circledast (p_{\bullet}*u_0)^2\right)_t(x)
			\\
		&\hskip1.3in\leq  u_0(\R) \left(2\Theta 
			\int_0^t p_s(0)\,\d s\right)^n \cdot p_t(0) (p_t*u_0)(x).
	\end{split}\end{equation}
\end{lemma}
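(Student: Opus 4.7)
The plan is to unfold the outermost convolution, apply Lemma \ref{lem:star} to the $n$-fold piece, and then reduce what remains to Lemma \ref{lem:pp} via a clean use of Chapman--Kolmogorov. By definition of $\circledast$, the left-hand side equals
\begin{equation*}
	\int_0^t\d s\int_{-\infty}^\infty\d y\
	\left( \underbrace{p^2\circledast\cdots\circledast p^2}_{n\text{ times}}\right)_{t-s}\hskip-.22in(x-y)\
	(p_s*u_0)^2(y).
\end{equation*}
The first step is to apply Lemma \ref{lem:star} to the inner $n$-fold iterated convolution, which bounds it by $(2\Theta\int_0^{t-s}p_r(0)\,\d r)^{n-1}p_{t-s}(0)p_{t-s}(x-y)$. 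Since the integral in $r$ is nondecreasing in its upper limit, we may replace $\int_0^{t-s}$ by $\int_0^t$ with no loss.

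The second step is to control $(p_s*u_0)^2(y)$ by factoring out one ``sup'' factor and keeping one linear factor; namely, from $(p_s*u_0)(y)\le p_s(0)u_0(\R)$ we get $(p_s*u_0)^2(y)\le u_0(\R)\,p_s(0)\,(p_s*u_0)(y)$. Substituting both estimates, the left-hand side is at most
\begin{equation*}
	u_0(\R)\left(2\Theta\int_0^t p_r(0)\,\d r\right)^{n-1}\int_0^t\d s\, p_{t-s}(0)p_s(0)
	\int_{-\infty}^\infty \d y\, p_{t-s}(x-y)(p_s*u_0)(y).
\end{equation*}

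The third step is to collapse the inner $y$-integral by Chapman--Kolmogorov: it equals $(p_{t-s}*p_s*u_0)(x)=(p_t*u_0)(x)$, which is independent of $s$ and can be pulled out. What remains is $\int_0^t p_{t-s}(0)p_s(0)\,\d s$, which by Lemma \ref{lem:pp} is bounded above by $2\Theta p_t(0)\int_0^t p_r(0)\,\d r$. Combining everything converts the exponent $n-1$ into $n$ and produces the factor $p_t(0)(p_t*u_0)(x)$, yielding exactly the claimed bound. No step looks like a real obstacle; the only point requiring a moment of care is ensuring that the ``sup versus linear'' split of $(p_s*u_0)^2$ is done the right way around, so that the residual $(p_s*u_0)(y)$ is what gets smoothed against $p_{t-s}$ by Chapman--Kolmogorov and produces $(p_t*u_0)(x)$ rather than something involving $(p_t*u_0^2)$ (which would not make sense for a measure).
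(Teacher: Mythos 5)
Your proof is correct and follows essentially the same route as the paper: bound $(p_s*u_0)^2\le u_0(\R)\,p_s(0)\,(p_s*u_0)$, apply Lemma~\ref{lem:star} to the $n$-fold factor, collapse the spatial integral by Chapman--Kolmogorov, and finish with Lemma~\ref{lem:pp}. The only cosmetic difference is that the paper first records the bound for a general nonnegative $f$ and then specializes, whereas you work directly with $f=(p_\bullet*u_0)^2$; the bounds and their order are otherwise the same.
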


\begin{proof}
	For every nonnegative function $f$, \eqref{circledast} and 
	Lemma \ref{lem:star} together imply that
	\begin{equation}\begin{split}
		\lefteqn{\left(\left( \underbrace{p^2 \circledast 
			\cdots \circledast p^2}\limits_{\text{%
			$n$ times}} \right)\circledast f\right)_t (x)} 
			 \\
		& =  \int_{0}^{t} \d s \int_{\R} \d y 
			\left( \underbrace{p^2 \circledast \cdots \circledast p^2}
			\limits_{\text{$n$ times}} \right)_s(y) 
			f_{t-s}(x-y)  \\
		& \leq  \int_{0}^{t} \d s \left(2\Theta \int_{0}^{s} p_r(0)\, \d r
			\right)^{n-1} p_s(0) \int_{\R} \d y \, p_s(y) f_{t-s}(x-y)  \\
		& \leq  \left(2\Theta \int_{0}^{t} p_r(0)
			\, \d r\right)^{n-1} \int_{0}^{t} \d s \ p_s(0) (p_s*f_{t-s})(x).
	\end{split}\end{equation}
	We set $f_t(x) := p_t(0)(p_t*u_0)(x)$ and appeal the Chapman--Kolmogorov
	property [$p_s*p_{t-s}=p_t$] in order to obtain the following:
	\begin{equation}\begin{split}
		&\left(\left( \underbrace{p^2 \circledast 
			\cdots \circledast p^2}\limits_{\text{%
			$n$ times}} \right)\circledast (p_{\bullet}*u_0)^2\right)_t (x)
			 \\
		& \leq  u_0(\R) \left(\left( \underbrace{p^2 
			\circledast \cdots \circledast p^2}\limits_{\text{%
			$n$ times}} \right)\circledast p_{\bullet}(0)
			(p_{\bullet}*u_0)\right)_t (x)  \\
		& \leq u_0(\R) \left(2\Theta \int_{0}^{t} p_r(0)\,\d r\right)^{n-1} 
			\int_{0}^{t} p_s(0)p_{t-s}(0) \d s \cdot (p_t*u_0)(x). 
	\end{split}\end{equation}
	An application of Lemma \ref{lem:pp} completes the proof.
\end{proof}

\section{Finite-horizon estimates}

We first define a sequence $\{u^{(n)}\}_{n\in\N}$ of random fields by: 
$u^{(0)}_t(x):=0$ for all $t > 0$ and $x \in \R$. Then, for every
$n\geq 0$, we set
\begin{equation}\label{eq:Picard}
	u^{(n+1)}_t(x) := (p_t*u_0)(x) + \int_{(0,t)\times\R}
	p_{t-s}(y-x)\sigma(u_s^{(n)})\, W(\d s \,\d y).
\end{equation}
Thus, $u^{(n)}$ denotes simply the $n$th stage of a Picard 
iteration approximation to a reasonable candidate for a solution
to \eqref{heat}.

Proposition \ref{pr:h1} below will show that the random variables 
$\{u^{(n)}_t(x)\}_{n\in\N}$ are well-defined with values in 
$L^k(\P)$, for $x\in \R$ and all times $t$ that are ``reasonably small.''

For each $a > 0$, let
\begin{equation}\label{f}
	\mathfrak{g}(a) := \inf\left\{ t>0:\, \int_0^t p_r(0)\,\d r
	\geq a \right\},
\end{equation}
where $\inf\varnothing:=\infty$. Clearly, $\mathfrak{g}(a)>0$
when $a>0$. 

\begin{proposition}\label{pr:h1}
	For all integers $n \geq 0$, real numbers $k\in[2\,,\infty)$ and
	$x\in\R$,
	\begin{equation}
		\left\|u_t^{(n+1)}(x) \right\|_k
		\leq 4 \left[ 1\vee k^{1/2}\lip\right]\sqrt{ u_0(\R) p_t(0) (p_t*u_0)(x)},
	\end{equation}
	for $0< t\leq \mathfrak{g}((32\Theta [1\vee k \lip^2])^{-1})$.
\end{proposition}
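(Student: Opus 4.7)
The plan is to proceed by induction on $n$. For the base case $n=0$, note that $\sigma(0)=0$ kills the stochastic integral in \eqref{eq:Picard}, so $u^{(1)}_t(x)=(p_t*u_0)(x)$. By \eqref{bounded}, $(p_t*u_0)(x)\le p_t(0)u_0(\R)$, so
\[
  (p_t*u_0)(x)\le\sqrt{p_t(0)u_0(\R)}\cdot\sqrt{(p_t*u_0)(x)}=\sqrt{u_0(\R)\,p_t(0)(p_t*u_0)(x)},
\]
which is trivially dominated by the target right-hand side since $1\le 4[1\vee k^{1/2}\lip]$.

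For the inductive step, assume the bound holds at stage $n$ for every $s\in(0,t]$ (valid because the hypothesis is inherited at smaller times). Writing $I_n$ for the Walsh stochastic integral in \eqref{eq:Picard}, Minkowski's inequality yields $\|u^{(n+1)}_t(x)\|_k\le(p_t*u_0)(x)+\|I_n\|_k$. Apply the Burkholder--Davis--Gundy inequality for white-noise integrals (with constant $4k$ on squared $L^k$-norms) together with Minkowski in the time-space integral of $\Phi^2$ to get
\[
  \|I_n\|_k^2\le 4k\int_0^t\!\int_\R p_{t-s}^2(y-x)\,\|\sigma(u_s^{(n)}(y))\|_k^2\,dy\,ds.
\]
Use $|\sigma(z)|\le\lip|z|$ (from $\sigma(0)=0$) and the squared inductive hypothesis to majorize $\|\sigma(u_s^{(n)}(y))\|_k^2$ by $16\lip^2[1\vee k\lip^2]\,u_0(\R)\,p_s(0)(p_s*u_0)(y)$. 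Next use the pointwise bound $p_{t-s}^2(y-x)\le p_{t-s}(0)p_{t-s}(y-x)$ to factor out $p_{t-s}(0)p_s(0)$, and then the Chapman--Kolmogorov identity $p_{t-s}*p_s=p_t$ to collapse the $y$-integral to $(p_t*u_0)(x)$. This reduces the double integral to $\int_0^t p_{t-s}(0)p_s(0)\,ds\cdot(p_t*u_0)(x)$, which by Lemma \ref{lem:pp} is at most $2\Theta\,p_t(0)\int_0^t p_r(0)\,dr\cdot(p_t*u_0)(x)$.

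Using $t\le\mathfrak{g}((32\Theta[1\vee k\lip^2])^{-1})$, so that $2\Theta\int_0^t p_r(0)\,dr\le 1/(16[1\vee k\lip^2])$, together with the elementary inequality $4k\lip^2\le 4[1\vee k\lip^2]$, everything collapses to
\[
  \|I_n\|_k^2\le 4[1\vee k\lip^2]\,u_0(\R)\,p_t(0)(p_t*u_0)(x),
\]
hence $\|I_n\|_k\le 2[1\vee k^{1/2}\lip]\sqrt{u_0(\R)\,p_t(0)(p_t*u_0)(x)}$. Adding the drift-term bound from the base case gives a total of at most $(1+2[1\vee k^{1/2}\lip])\sqrt{u_0(\R)p_t(0)(p_t*u_0)(x)}\le 4[1\vee k^{1/2}\lip]\sqrt{u_0(\R)p_t(0)(p_t*u_0)(x)}$, closing the induction. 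The main obstacle is really just the constant bookkeeping: one must tune the coefficient $32\Theta$ in the definition of $\mathfrak{g}$ so that the BDG constant $4k$, the $16$ from squaring the inductive constant, the $2\Theta$ from Lemma \ref{lem:pp}, and the Lipschitz factor combine to let $[1\vee k\lip^2]$ cancel and reproduce exactly the same constant $4$ at the next Picard stage.
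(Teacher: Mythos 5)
Your proof is correct. The accounting checks out: the BDG constant $4k$, the $\lip^2$ from the Lipschitz bound, the $16[1\vee k\lip^2]$ from squaring the inductive constant, the factor $2\Theta$ from Lemma~\ref{lem:pp}, and the threshold $\mathfrak g((32\Theta[1\vee k\lip^2])^{-1})$ combine to give $\|I_n\|_k^2\le 4k\lip^2\le 4[1\vee k\lip^2]$ times $u_0(\R)p_t(0)(p_t*u_0)(x)$, and adding the drift term closes the induction with room to spare ($1+2\le 4$). The observation that the time restriction is inherited at smaller $s\le t$ is also the right thing to say to make the induction well-founded.

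The route is genuinely different from the paper's, though. The paper sets up the same squared-$L^k$ recursion but then \emph{unrolls} it completely, producing a sum $\sum_{j=0}^n C_k^{j+1}\bigl((p^2\circledast\cdots\circledast p^2)\circledast(p_\bullet * u_0)^2\bigr)_t(x)$; each iterated space-time convolution is then controlled by Lemma~\ref{lem:star2} (which in turn rests on Lemma~\ref{lem:star}), and the resulting geometric series is summed. You instead close the induction at a single step, plugging the inductive bound directly into the one-step recursion and using only Lemma~\ref{lem:pp} together with Chapman--Kolmogorov. Your argument is therefore a bit more elementary: it bypasses Lemmas~\ref{lem:star} and~\ref{lem:star2} altogether. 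The paper's unrolled form, on the other hand, makes the iterated-convolution machinery available for the parallel estimate in Proposition~\ref{pr:h2} (where one really does need to show that the successive differences decay geometrically, not just stay bounded), so within the paper's overall architecture the geometric-series version does more work. For the statement as given, both are valid and reach the same constant and the same time horizon.
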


\begin{proof}
	Define
	\begin{equation}
		\mu^{(n,k)}_t(x) := \left\|u^{(n)}_t(x) \right\|_k^2.
	\end{equation}
	Clearly, $\mu^{(0,k)}_t(x) \equiv 0$. Then, we have
	\begin{align}\nonumber
		&\hskip-.3in\sqrt{\mu^{(n+1,k)}_t(x)}\\
		&\leq ( p_t*u_0)(x)
			+ \left\| \int_{(0,t)\times\R}p_{t-s}(y-x)\sigma
			\left( u^{(n)}_s(y)\right)
			\, W(\d s\,\d y)\right\|_k\\\nonumber
		&\leq ( p_t*u_0)(x)
			+ \left(4k \cdot  \int_0^t\d s\int_{-\infty}^\infty\d y\
			p_{t-s}^2(y-x) \left\| \sigma\left( u^{(n)}_s(y) \right) \right\|_k^2
			\right)^{1/2}\\\nonumber
		&\leq ( p_t*u_0)(x)
			+ \left(4k \lip^2\cdot  \int_0^t\d s\int_{-\infty}^\infty\d y\
			p_{t-s}^2(y-x) \mu^{(n,k)}_s(y)
			\right)^{1/2}\\\nonumber
		&= (p_t*u_0)(x) + \left( 4k\lip^2\cdot
			\left( p^2\circledast \mu^{(n,k)} \right)_t(x) \right)^{1/2}.
	\end{align}
	Notice, that \eqref{bounded} implies that 
	$|(p_t*u_0)(x)|^2 \leq u_0(\R)p_t(0)(p_t*u_0)(x)$. 
	Now, define $C_k:= 8( 1 \vee k\lip^2)$, and note that
	\begin{equation}\begin{split}
		\mu^{(n+1,k)}_t(x)
			&\leq 2|(p_t*u_0)(x)|^2 +
			8 k \lip^2\cdot
			\left( p^2\circledast \mu^{(n,k)} \right)_t(x) \\
		&\leq C_k\left[|(p_t*u_0)(x)|^2 +
			\left( p^2\circledast \mu^{(n,k)} \right)_t(x) \right]\\
		&\ \,\vdots\\
		&\leq \sum_{j=0}^n C_k^{j+1} \left( \left( 
			\underbrace{p^2 \circledast
			\cdots \circledast p^2}\limits_{\text{$j$ times}} \right)
			\circledast (p_{\bullet}*u_0)^2\right)_t (x).
	\end{split}\end{equation}
	We apply Lemma \ref{lem:star2} to find that
	\begin{equation}\begin{split}
		\mu^{(n+1,k)}_t(x) &\leq C_k u_0(\R)
			p_t(0)(p_t*u_0)(x) \cdot
			\sum_{j=0}^n  \left( 2C_k\Theta 
			\cdot\int_0^t p_s(0)\,\d s\right)^j\\
		&\leq  2C_k u_0(\R) p_t(0) (p_t*u_0)(x),
	\end{split}\end{equation}
	provided that $t\leq \mathfrak{g}((4C_k\Theta )^{-1})$.
	This is another way to state the lemma.
\end{proof}

\begin{proposition}\label{pr:h2}
	If $0<t\leq \mathfrak{g}((16k\Theta \lip^2)^{-1})$, then for all integers
	$n\geq 0$ and for all $x\in\R$,
	\begin{equation}
		\left\|u^{(n+1)}_t(x)-u^{(n)}_t(x)\right\|_k^2 
		\leq  2^{-n} u_0(\R) p_t(0) (p_t*u_0)(x).
	\end{equation}
\end{proposition}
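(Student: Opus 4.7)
The plan is to argue by induction on $n$, in parallel with the proof of Proposition \ref{pr:h1} but now tracking how each Picard increment contracts.

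For the base case $n=0$, I would use that $u^{(0)}\equiv 0$ together with $\sigma(0)=0$ to conclude that the stochastic integral in \eqref{eq:Picard} vanishes, so $u^{(1)}_t(x)=(p_t*u_0)(x)$ is nonrandom. Therefore
\begin{equation*}
	\left\|u^{(1)}_t(x)-u^{(0)}_t(x)\right\|_k^2 = |(p_t*u_0)(x)|^2
	\le u_0(\R)\,p_t(0)\,(p_t*u_0)(x),
\end{equation*}
where the last inequality is exactly the estimate \eqref{bounded}.

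For the induction step, set $D^{(n)}_t(x):=\|u^{(n+1)}_t(x)-u^{(n)}_t(x)\|_k^2$. Subtract two consecutive lines of \eqref{eq:Picard}; the deterministic terms cancel and one is left with a Walsh stochastic integral whose integrand is $p_{t-s}(y-x)[\sigma(u^{(n)}_s(y))-\sigma(u^{(n-1)}_s(y))]$. Applying the BDG-type inequality for Walsh integrals with constant $4k$ (as in the proof of Proposition \ref{pr:h1}) and then the Lipschitz property of $\sigma$ yields
\begin{equation*}
	D^{(n)}_t(x)\le 4k\lip^2\cdot\bigl(p^2\circledast D^{(n-1)}\bigr)_t(x).
\end{equation*}
Inserting the inductive bound $D^{(n-1)}_s(y)\le 2^{-(n-1)}u_0(\R)p_s(0)(p_s*u_0)(y)$, then pulling the scalar $p_{t-s}(0)$ out of the spatial convolution and using the Chapman--Kolmogorov identity $p_s*p_{t-s}=p_t$ (exactly the computation that underlies Lemma \ref{lem:star2} for $n=1$), the right-hand side collapses to
\begin{equation*}
	4k\lip^2\cdot 2^{-(n-1)}u_0(\R)\,(p_t*u_0)(x)\int_0^t p_s(0)p_{t-s}(0)\,\d s.
\end{equation*}

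Finally I invoke Lemma \ref{lem:pp} to bound the time integral by $2\Theta\,p_t(0)\int_0^t p_r(0)\,\d r$, which gives
\begin{equation*}
	D^{(n)}_t(x)\le 2^{-(n-1)}\Bigl(8k\lip^2\Theta\!\int_0^t\! p_r(0)\,\d r\Bigr)\cdot u_0(\R)p_t(0)(p_t*u_0)(x).
\end{equation*}
The constraint $t\le\mathfrak{g}((16k\Theta\lip^2)^{-1})$ is precisely what forces the parenthesized factor to be at most $1/2$, closing the induction at $2^{-n}$. There is no real obstacle here—everything is bookkeeping—but if anything needed care, it is matching the numerical constant $16k\Theta\lip^2$ in the definition of the admissible time window to the contraction ratio $1/2$; this is why the threshold in Proposition \ref{pr:h2} is tightened relative to the one in Proposition \ref{pr:h1}.
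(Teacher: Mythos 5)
Your proof is correct and follows essentially the same route as the paper. The only difference is presentational: the paper unrolls the recursion $D^{(n+1,k)}\le 4k\lip^2\,(p^2\circledast D^{(n,k)})$ all the way down to $D^{(1,k)}_s(y)=|(p_s*u_0)(y)|^2$ and then estimates the resulting $n$-fold space-time convolution in a single application of Lemma \ref{lem:star2}, whereas you close the induction one step at a time using the $n=1$ version of that estimate (Chapman--Kolmogorov together with $p_{t-s}^2\le p_{t-s}(0)p_{t-s}$) followed by Lemma \ref{lem:pp}; both yield the same contraction factor $8k\lip^2\Theta\int_0^t p_r(0)\,\d r\le\nicefrac12$, so the content is identical.
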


\begin{proof}
	Define for all $n\geq 0$, $t>0$, and $x\in\R$,
	\begin{equation}
		D^{(n+1,k)}_t(x) := \E\left( |u^{(n+1)}_t(x)-u^{(n)}_t(x)|^k\right)^{2/k} = \|u^{(n+1)}_t(x)-u^{(n)}_t(x)\|_k^2.
	\end{equation}
	Clearly,
	\begin{align}\nonumber
		D^{(n+1,k)}_t(x) &\leq 4k \int_0^t\d s\int_{-\infty}^\infty\d y\
			p_{t-s}^2(y-x) \E\left( \left|
			\sigma\left( u^{(n)}_s(y) \right) -
			\sigma\left( u^{(n-1)}_s(y) \right) \right|^k\right)^{2/k}\\\nonumber
		&\le4k\lip^2\cdot\int_0^t\d s\int_{-\infty}^\infty\d y\
			p_{t-s}^2(y-x)  D^{(n,k)}_s(y)\\\nonumber
		&=4k\lip^2\cdot \left( p^2 \circledast D^{(n,k)}\right)_t(x) \\
		&\leq \cdots\leq (4k\lip^2)^{n} \cdot \left(\,
			\underbrace{p^2 \circledast 
			\cdots \circledast p^2}\limits_{\text{$n$ times}}
			\circledast \, D^{(1,k)}\right)_t(x).
	\end{align}
	Note that for all $k \in [2,\infty)$,
	\begin{equation}
		D^{(1,k)}_s(y) = |(p_s*u_0)(y)|^2. 
	\end{equation}
	Therefore, in accord with Lemma \ref{lem:star2},
	\begin{equation}
		D^{(n+1,k)}_t(x)\leq 
		u_0(\R)\left(8k\lip^2 \Theta \int_0^t p_r(0)\,\d r\right)^n  p_t(0) (p_t*u_0)(x),
	\end{equation}
	and the result follows.
\end{proof}

\section{Proof of Theorem \ref{th:exist:unique}}
We prove Theorem \ref{th:exist:unique} in two parts: First we show
that there exists a solution up to time $\mathfrak{T}:=\mathfrak{g}((64\Theta [1\vee
\lip^2])^{-1})$. From there on,
it is easy to produce an all-time solution, starting from
time $t=\mathfrak{T}$.

Let $\{u^{(n)}\}_{n=0}^\infty$ be the described Picard iterates defined in \eqref{eq:Picard}.
Since
\begin{equation}
	0 < \mathfrak{T}  \leq 
	\mathfrak{g}\left( \left(32 \Theta \lip^2\right)^{-1}\right),
\end{equation}
Proposition \ref{pr:h2} implies that the sequence of random 
variables $\{u^{(n)}_t(x)\}_{n\in\N}$  converge in $L^2(\Omega)$ 
for every $0<t\le \mathfrak{T}$ and $x\in\R$.\footnote{As is customary,
$\Omega$ denotes the underlying sample space on which random
variables are defined, and
$L^2(\Omega)$ denotes the collection of all random vaiables on
$\Omega$ that have two finite moments.}

Define $U_t(x) := \lim_{n\rightarrow\infty} u^{(n)}_t(x)$, where the limit is taken in $L^2(\Omega)$. By default, 
$\{U_t(x);\, x\in\R,t\in(0,\mathfrak{T}]\}$ is a predictable random field such that
\begin{equation}
	\lim_{n\to\infty}
	\E\left( \left| u^{(n)}_t(x)  - U_t(x)\right|^k\right) = 0
	\quad\text{for all $x\in\R$},
\end{equation}
provided that $t\in(0\,,\mathfrak{T}_k]$, where $\mathfrak{T}_k := \mathfrak{g}\left( \left( 32k\Theta \left[ 1\vee \lip^2\right]
	\right)^{-1}\right)$. (Notice that $\mathfrak{T}_2 = \mathfrak{T}$.) Moreover, Proposition \ref{pr:h1} tells us that
\begin{equation}
	\left\| U_t(x) \right\|_k \leq 4k^{\nicefrac12} \left[1\vee\lip\right]
	\sqrt{u_0(\R) p_t(0)(p_t*u_0)(x)},
\end{equation}
for all $x\in\R$ and $t\in(0\,,\mathfrak{T}_k]$. Finally, these remarks
readily imply that
\begin{equation}
	U_t(x) = (p_t*u_0)(x) + \int_{(0,t)\times\R}
	p_{t-s}(y-x) \sigma\left( U_s(y)\right)\, 
	W(\d s\,\d y),
\end{equation}
for all $x\in\R$ and $t\in(0\,,\mathfrak{T}]$. In other words, 
$U$ is a mild solution to \eqref{heat} up to the nonrandom time
$\mathfrak{T}$. 

Next we define a space-time white noise $\dot{\mathcal{W}}$ by defining
its Wiener integrals as follows: For all $h\in L^2(\R_+\times\R)$,
\begin{equation}
	\int_{\R_+\times\R} h_s(y)\, \mathcal{W} (\d s\,\d y) :=
	\int_{(\mathfrak{T},\infty)\times\R} h_{s-\mathfrak{T}}(y)\,
	W(\d s\,\d y).
\end{equation}
In other words, $\dot{\mathcal{W}}$ is obtained from $\dot{W}$ by shifting
the time $\mathfrak{T}$ steps. Induction reveals that every
$\{u^{(n)}_t(x);\, x\in\R,\, t\in(0\,,\mathfrak{T}]\}$ 
is independent of the space-time white noise $\dot{\mathcal{W}}$. Therefore,
so is the short-time solution $\{U_t(x);\, x\in\R,\, t\in(0\,,\mathfrak{T}]\}$.

Next let $V:=\{V_t(x)\}_{x\in\R,t>0}$ denote the mild solution to
the stochastic heat equation
\begin{equation}
	\frac{\partial}{\partial t} V_t(x) = (\mathcal{L} V_t)(x) +
	\sigma(V_t(x))\dot{\mathcal{W}}_t(x),
\end{equation}
subject to $V_0(x) = U_{\mathfrak{T}}(x)$. Since $U_{\mathfrak{T}}$
is independent of the noise $\dot{\mathcal{W}}$, the preceding has
a unique solution, thanks to Dalang's theorem (Theorem \ref{th:Dalang}).
And since $\sup_{x\in\R}\|U_{\mathfrak{T}}(x)\|_2<\infty$
for all $\epsilon>0$, there exists $D_\epsilon\in(0\,,\infty)$ such that
for all $t>0$, and $x\in\R$,
\begin{equation}
	\E\left( |V_t(x)|^2\right) \leq D_\epsilon^2
	\e^{(1+\epsilon)\gamma(2)t},
\end{equation}
thanks to Theorem \ref{th:FK}.
Finally, we define for all $x\in\R$,
\begin{equation}
	u_t(x) := \begin{cases}
		U_t(x)&\text{if $t\in(0\,,\mathfrak{T}]$},\\
		V_{t-\mathfrak{T}}(x)&\text{if $t>\mathfrak{T}$}.
	\end{cases}
\end{equation}
Then it is easy to see that the random field $u$ is predictable, and is a mild
solution to \eqref{heat} for all $t>0$,  subject to initial measure being
$u_0$. Uniqueness is  a standard consequence of \eqref{eq:exist:unique}.

Let us now consider $k > 2$ and follow the same argument as above, 
but use $\mathfrak{T}_k$ instead of $\mathfrak{T}$; i.e., 
we run the solution $U$ up to time $\mathfrak{T}_k$ only and then 
keep going with the classical technique of Dalang. Then, a similar 
argument leads us to a moment estimate of order $k$ for $u_t(x)$,
thanks to another application of Theorem \ref{th:FK}. 
The solution obtained from $\mathfrak{T}_k$ is the same as the one obtained when stopping at $\mathfrak{T}$ by the uniqueness result.
\qed\\

We pause to state an immediate corollary of the proof of
Theorem \ref{th:exist:unique}, as it might be of some independent interest.
In words, the following shows that if $\sigma$ has truly-linear growth and
$\Theta <\infty$, then the solution to \eqref{heat}
has nontrivial moment Liapounov exponents.

\begin{corollary}
	Suppose $\Theta <\infty$, $\textnormal{L}_\sigma:=
	\inf_{x\in\R}|\sigma(x)/x|>0$, and $u_0$ is a finite Borel measure
	on $\R$. Then,
	\begin{equation}
		0<\limsup_{t\to\infty} \frac1t \log\E\left(|u_t(x)|^k\right)<\infty,
	\end{equation}
	for all $k\in[2\,,\infty)$.
\end{corollary}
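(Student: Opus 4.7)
This part is a direct consequence of Theorem~\ref{th:exist:unique}. Since $t\mapsto p_t(0)$ is nonincreasing and $(p_t\ast u_0)(x)\leq p_t(0)u_0(\R)$, the factor $\{1+p_t(0)(p_t\ast u_0)(x)\}^{k/2}$ appearing there is uniformly bounded in $t\geq 1$, so the estimate yields $\limsup_{t\to\infty} t^{-1}\log\E(|u_t(x)|^k)\leq(1+\epsilon)\gamma(k)<\infty$ for every $\epsilon>0$.

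\textbf{Lower bound.} I would first reduce to $k=2$ via Jensen's inequality $\|u_t(x)\|_k\geq\|u_t(x)\|_2$, and then extract exponential growth of the second moment by a renewal-type argument in the spirit of Foondun--Khoshnevisan \cite{FK}. Writing $M_t(x):=\E(|u_t(x)|^2)$ and $F_t:=p_t\ast u_0$, the Walsh--It\^o isometry applied to the mild formulation yields
\[
M_t(x) = F_t(x)^2 + \int_0^t\!\int_\R p_{t-s}^2(y-x)\,\E(\sigma(u_s(y))^2)\,dy\,ds,
\]
and the hypothesis $|\sigma(y)|\geq\textnormal{L}_\sigma|y|$ turns this identity into the renewal inequality $M_t(x)\geq F_t(x)^2+\textnormal{L}_\sigma^2\,(p^2\circledast M)_t(x)$. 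Integrating in $x$ and using $\int p_{t-s}^2(y-x)\,dx=p_{2(t-s)}(0)$ then produces a closed inequality for $\mathcal{A}_t:=\int_\R M_t(x)\,dx$, whose Laplace transform in $t$ reads $(1-\textnormal{L}_\sigma^2\Upsilon(\beta))\,\tilde{\mathcal{A}}(\beta)\geq\tilde g(\beta)$, with $\tilde g(\beta)=\frac{1}{2\pi}\int|\hat u_0(\xi)|^2/(\beta+2\Psi(\xi))\,d\xi$ strictly positive because $u_0$ is a nontrivial finite measure. Picking $\beta_\ast>0$ with $\textnormal{L}_\sigma^2\Upsilon(\beta_\ast)=1$---possible since $\Upsilon$ is continuous and decreasing with $\Upsilon(0^+)=\infty$ in the present setting---makes the left factor negative for $\beta<\beta_\ast$, forcing $\tilde{\mathcal{A}}(\beta)=+\infty$ there, and hence $\limsup_{t\to\infty} t^{-1}\log\mathcal{A}_t\geq\beta_\ast>0$.

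\textbf{From integrated to pointwise growth.} To upgrade this integrated bound to the pointwise statement $M_t(x)\geq c(x)e^{(\beta_\ast-\epsilon)t}$ at an arbitrary fixed $x$, I would apply the renewal inequality one more time on the interval $s\in[t/2,t]$,
\[
M_t(x)\geq\textnormal{L}_\sigma^2\int_{t/2}^t\!\int_\R p_{t-s}^2(y-x)\,M_s(y)\,dy\,ds,
\]
and then combine the exponential lower bound on $\int M_s(y)\,dy$ with the strict positivity and spatial regularity of the kernel $p_r^2$ to extract a positive-rate lower bound at every $x$. Together with the Jensen reduction, this gives $\E(|u_t(x)|^k)\geq c(x)^{k/2}\exp\!\bigl(\tfrac k2(\beta_\ast-\epsilon)t\bigr)$, completing the proof. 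The main technical obstacle, I expect, is precisely this last transfer step---converting an integrated exponential lower bound on $\mathcal{A}_t$ into a genuinely pointwise one on $M_t(x)$; everything else reduces to routine bookkeeping via the Walsh isometry and a single Laplace transform.
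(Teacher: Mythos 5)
Your upper bound is fine and is essentially the same observation the paper would make: for $t\geq 1$ the factor $\{1+p_t(0)(p_t*u_0)(x)\}^{k/2}$ in Theorem \ref{th:exist:unique} is bounded, so $\limsup_{t\to\infty}t^{-1}\log\E(|u_t(x)|^k)\leq(1+\epsilon)\gamma(k)$.

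For the lower bound your route is genuinely different from the paper's. The paper's proof is a two-line reduction: it considers the post-$\mathfrak{T}_k$ process $V$, whose initial condition $U_{\mathfrak{T}_k}$ is a random field with uniformly bounded moments, and then simply invokes the Liapounov-exponent theorems of Foondun--Khoshnevisan \cite{FK} for that classical setting. You instead attempt a self-contained renewal/Laplace-transform argument. The setup---Walsh isometry, the lower-bound renewal inequality coming from $|\sigma(y)|\geq \textnormal{L}_\sigma|y|$, spatial integration to get a scalar inequality for $\mathcal{A}_t=\|\E(|u_t|^2)\|_{L^1(\R)}$, and the Laplace-transform dichotomy forcing $\tilde{\mathcal{A}}(\beta)=\infty$ for $\beta<\beta_*$---is correct and clean, and it does yield $\limsup_{t\to\infty}t^{-1}\log\mathcal{A}_t\geq\beta_*>0$. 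This is a more elementary derivation than the paper's cite-and-reduce, at the cost of having to re-prove what \cite{FK} already contains.

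The genuine gap is exactly where you flag it, and the sketch you offer does not close it. From $\mathcal{A}_s=\int_\R M_s(y)\,\d y\gtrsim\e^{\beta_*s}$ and the single extra renewal step
\[
M_t(x)\geq \textnormal{L}_\sigma^2\int_{t/2}^t\!\int_\R p_{t-s}^2(y-x)\,M_s(y)\,\d y\,\d s
\]
you cannot deduce pointwise exponential growth, because the kernel $p_{t-s}^2(y-x)$ decays in $|y-x|$ while the integrated bound gives no control on \emph{where} the mass of $M_s(\cdot)$ sits. A priori the bulk of $\int M_s(y)\,\d y$ could lie at $|y|\sim s$, say, in which case $\int p_{t-s}^2(y-x)M_s(y)\,\d y$ would be tiny for any fixed $x$. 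To make the transfer work you need an additional localization input---for instance a spatial unimodality/symmetry property of $M_s$ (available when $u_0=\delta_0$ but not for general finite $u_0$), or a uniform-in-$y$ upper bound on $M_s(y)$ sharp enough to show the mass cannot escape, or a renewal-density argument tracking the spatial spread of the iterated kernels $(p^2)^{\circledast n}$. None of these is ``routine bookkeeping''; it is precisely the point at which the paper opts to shift time and hand the problem to the bounded-initial-data theory of \cite{FK} instead. As written, the lower-bound half of your argument is therefore incomplete.
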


\begin{proof}
	Let $\{V_t(x)\}_{t>0,x\in\R}$ denote the post-$\mathfrak{T}_k$ process
	used in the proof of Theorem \ref{th:exist:unique}. It suffices to prove that
	\begin{equation}
		0<\limsup_{t\to\infty} \frac1t \log\E\left(|V_t(x)|^k\right)<\infty,
	\end{equation}
	for all $k\in[2\,,\infty)$. This follows from \cite{FK}.
\end{proof}

The  proof of Theorem \ref{th:exist:unique}
is based on the idea that one can solve \eqref{heat}
up to time $\mathfrak{T}$, using the method of the present paper; and then
from time $\mathfrak{T}$ on we paste the more usual solution, shifted by time
$\mathfrak{T}$ time steps, in order to obtain a global solution to \eqref{heat}.
But in fact since the pre-$\mathfrak{T}$ and the post-$\mathfrak{T}$ solutions
are unique [a.s.], we could replace $\mathfrak{T}$ by any other time $\eta$ (not necessarily one of the $\mathfrak{T}_k$) before it
as well. The following merely enunciates these observations in the form of a 
proposition. The proof follows from the fact that the sequence $\mathfrak{T}_k$ goes to $0$ as $k$ increases. We omit the details. However, we state
this simple result explicitly, as it will be central to 
our proof of Theorem \ref{th:NoChaos}.

\begin{proposition}\label{pr:modify:eta}
	Choose and fix some $\eta\in(0\,,\mathfrak{T})$,
	and let us define the predictable random field
	$\{\bar{V}_t(x)\}_{t>0,x\in\R}$ exactly as we defined
	$\{V_t(x)\}_{t>0,x\in\R}$,
	except with $\mathfrak{T}$ replaced everywhere by $\eta$.
	Finally define $\bar{u}_t(x)$ as we did $u_t(x)$, except we replace
	$(U,V,\mathfrak{T})$ by $(U,\bar{V},\eta)$; that is,
	\begin{equation}
		\bar{u}_t(x) := \begin{cases}
			U_t(x)&\text{if $t\in(0\,,\eta]$,}\\
			\bar{V}_{t-\eta}(x)&\text{if $t>\eta$}.
		\end{cases}
	\end{equation}
	Then, the random field $\bar{u}$ is a modification of 
	the random field $u$.
\end{proposition}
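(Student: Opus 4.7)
The plan is to show that $\bar{u}_t(x) = u_t(x)$ almost surely for each fixed $(t,x) \in (0,\infty)\times\R$, by separately handling the three time regimes $(0,\eta]$, $(\eta,\mathfrak{T}]$, and $(\mathfrak{T},\infty)$. On $(0,\eta]$ both processes are, by definition, equal to $U_t(x)$, so there is nothing to do; the real content is the two pasting arguments that handle the other two regimes, and they follow the same template.

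For $t \in (\eta,\mathfrak{T}]$, I would introduce the shifted field $\tilde{U}_s(x) := U_{s+\eta}(x)$ for $s \in (0,\mathfrak{T}-\eta]$ and verify that $\tilde{U}$ is a mild solution of \eqref{heat} on that window, started from the (random) initial condition $U_\eta$ and driven by the time-$\eta$ shift of $\dot{W}$. The key computation is to take the mild equation satisfied by $U_t(x)$, split its stochastic integral at time $\eta$, and use the Chapman--Kolmogorov identity $p_\eta * p_{t-\eta} = p_t$ to combine the deterministic term and the integral over $(0,\eta)\times\R$ into $(p_{t-\eta} * U_\eta)(x)$. A change of variables $s \mapsto s+\eta$ in the remaining integral then produces exactly the mild equation for $\tilde{U}$. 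By construction $\bar{V}$ satisfies the same mild equation with the same initial datum and the same shifted noise. Proposition \ref{pr:h1} with $k=2$ gives $\sup_x \|U_\eta(x)\|_2 < \infty$ (since $p_\eta(0)(p_\eta*u_0)(x) \le p_\eta(0)^2 u_0(\R)$), and the same bound, applied at times $s+\eta \in [\eta,\mathfrak{T}]$, yields $\sup_{s \le \mathfrak{T}-\eta}\sup_x \E(|\tilde{U}_s(x)|^2) < \infty$; Theorem \ref{th:FK} then supplies the analogous bound for $\bar{V}$. Dalang's uniqueness (Theorem \ref{th:Dalang} with $k=2$) therefore forces $\tilde{U}_s(x) = \bar{V}_s(x)$ almost surely, i.e.\ $U_t(x) = \bar{V}_{t-\eta}(x)$ for $t \in (\eta,\mathfrak{T}]$.

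For $t > \mathfrak{T}$ I would apply the same idea one level up, to the post-$\mathfrak{T}$ pieces. Set $\hat{V}_s(x) := \bar{V}_{s+(\mathfrak{T}-\eta)}(x)$ for $s>0$. The previous step identifies the initial value $\hat{V}_0 = \bar{V}_{\mathfrak{T}-\eta} = U_\mathfrak{T}$, and the time-$\eta$ shift of $\dot{W}$ further shifted by $\mathfrak{T}-\eta$ is exactly the time-$\mathfrak{T}$ shift, which is the noise $\dot{\mathcal{W}}$ driving $V$. The same Chapman--Kolmogorov splitting, applied now to the mild equation for $\bar{V}$ at time $s+(\mathfrak{T}-\eta)$, shows that $\hat{V}$ is a mild solution of \eqref{heat} with initial datum $U_\mathfrak{T}$ and noise $\dot{\mathcal{W}}$. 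Since $V$ is (by Theorem \ref{th:Dalang}) the unique such solution, $\hat{V}_s(x) = V_s(x)$ almost surely, which translates to $\bar{V}_{t-\eta}(x) = V_{t-\mathfrak{T}}(x) = u_t(x)$ for every $t > \mathfrak{T}$.

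The main obstacle is bookkeeping: tracking how the white noise is relabelled under each time shift and checking that every candidate solution has enough moments to justify the invocation of Dalang's uniqueness. No new estimates are needed — Proposition \ref{pr:h1} produces an $L^2$ bound on $U_\eta$ and on $U_\mathfrak{T}$, and the observation that $\mathfrak{T}_k \downarrow 0$ is precisely what lets one pick $k$ so that the relevant moment of $U_\eta$ or $U_\mathfrak{T}$ is finite whenever a stronger uniqueness class is desired. Once the bookkeeping is laid out, the proposition follows from two essentially identical applications of the semigroup/uniqueness pasting.
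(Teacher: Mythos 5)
The paper explicitly omits the details of this proof, offering only the one-line hint that ``the sequence $\mathfrak T_k$ goes to $0$,'' so there is no full argument in the paper to compare against; your reconstruction, however, is sound and is almost certainly what the authors intended. The two-stage pasting — first compare the time-shifted field $\tilde U_s:=U_{s+\eta}$ with $\bar V_s$ on $(0,\mathfrak T-\eta]$ via Chapman--Kolmogorov and stochastic Fubini, then compare $\hat V_s:=\bar V_{s+(\mathfrak T-\eta)}$ with $V_s$ — reduces each regime to a uniqueness statement for the mild equation settled by Theorem \ref{th:Dalang} with $k=2$. Two minor corrections are in order. First, your appeal to Theorem \ref{th:FK} to control $\bar V$ is both unnecessary and, as that theorem is stated, technically unavailable: it hypothesizes finite $L^k$ moments of the initial field for \emph{every} $k\in[2,\infty)$, which $U_\eta$ need not enjoy once $\eta>\mathfrak T_k$ for some $k$. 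What you actually need is only that $\bar V$ and $V$ lie in the $L^2$ uniqueness class, and that is already built into the conclusion of Theorem \ref{th:Dalang}, so no separate growth bound is required. Second, your closing remark about $\mathfrak T_k\downarrow 0$ ``letting one pick $k$ so that the relevant moment of $U_\eta$ is finite'' is inverted: because $\mathfrak T_k$ is \emph{decreasing} in $k$, a fixed $\eta\in(0,\mathfrak T)$ admits only a bounded range of $k$ with $\eta\le\mathfrak T_k$, so raising $k$ shrinks rather than enlarges the window where Proposition \ref{pr:h1} applies. Fortunately your argument never needs more than $k=2$, for which $\mathfrak T_2=\mathfrak T>\eta$ always holds, so this slip does not create a gap; it only means the $\mathfrak T_k\downarrow 0$ fact plays no role in the proof as you have written it.
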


\section{Stability and positivity}

Let $u$ denote the solution to \eqref{heat},  as
defined in Theorem \ref{th:exist:unique},
starting from a finite Borel measure $u_0$. We have seen that
$(p_t*u_0)(x)$ is finite for all $t>0$ and $x\in\R$ fixed. Also, for $\epsilon > 0$, let
$U^{(\epsilon)}$ denote the solution to \eqref{heat}, starting
from the [bounded and measurable] initial function $p_\epsilon*u_0$.

\begin{proposition}[Stability]\label{pr:stability}
	For every $t,\epsilon>0$,
	$u_t, U^{(\epsilon)}_t\in L^2(\Omega\times\R)$.
	Moreover, the following bound is valid
	for all $\beta$ such that $\Upsilon(\beta) \geq (2\lip^2)^{-1}$:
	\begin{equation}\begin{split}
		&\hskip-.2in\int_0^\infty\e^{-\beta t}\,\d t\int_{-\infty}^\infty\d x\
			\E\left(\left|u_t(x) - U^{(\epsilon)}_t(x) \right|^2\right)\\
		&\hskip2in\leq \frac{[u_0(\R)]^2}{\pi}\int_{-\infty}^\infty
			\frac{\left(1-\e^{-\epsilon\Psi(\xi)}\right)^2}{\beta+2\Psi(\xi)}
			\,\d\xi.
	\end{split}\end{equation}
	In particular, the left-hand side tends to zero as $\epsilon\downarrow 0$.
\end{proposition}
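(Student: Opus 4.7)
My plan is to work with the mild formulations of $u_t$ and $U^{(\epsilon)}_t$ and analyze their difference by It\^o's isometry, then integrate over $x$ and take a Laplace transform in $t$. Since $p_t*(p_\epsilon*u_0) = p_{t+\epsilon}*u_0$ by Chapman--Kolmogorov, subtracting the two mild equations yields
\begin{equation*}
u_t(x)-U^{(\epsilon)}_t(x) = \Delta^{(\epsilon)}_t(x) + I^{(\epsilon)}_t(x),
\end{equation*}
where $\Delta^{(\epsilon)}_t(x):=(p_t*u_0)(x)-(p_{t+\epsilon}*u_0)(x)$ is deterministic and $I^{(\epsilon)}_t(x)$ is the mean-zero stochastic integral of $p_{t-s}(y-x)[\sigma(u_s(y))-\sigma(U^{(\epsilon)}_s(y))]$ against $W$. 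Before estimating, I would verify that $u_t,U^{(\epsilon)}_t\in L^2(\Omega\times\R)$: the sharp pointwise bound of Proposition \ref{pr:h1} with $k=2$ gives $\E|u_t(x)|^2 \leq C u_0(\R) p_t(0)(p_t*u_0)(x)$ for small $t$, whose integral in $x$ is $C u_0(\R)^2 p_t(0)<\infty$, and a Volterra bootstrap in $t$ (using $\int_\R p_{t-s}^2(y-x)\,dx = p_{2(t-s)}(0)$ and $\int |(p_t*u_0)(x)|^2\,dx \leq u_0(\R)^2 p_{2t}(0)$ from Plancherel) extends this to all $t>0$; the case of $U^{(\epsilon)}$ is similar and easier since $p_\epsilon*u_0$ is bounded.

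Squaring the difference and using orthogonality of the deterministic term and the mean-zero $I^{(\epsilon)}_t$, together with It\^o's isometry and Lipschitz continuity of $\sigma$, gives the pointwise bound
\begin{equation*}
\E|u_t(x)-U^{(\epsilon)}_t(x)|^2 \leq |\Delta^{(\epsilon)}_t(x)|^2 + \lip^2 \int_0^t\int_\R p_{t-s}^2(y-x)\,\E|u_s(y)-U^{(\epsilon)}_s(y)|^2\,dy\,ds.
\end{equation*}
Integrating over $x\in\R$ using $\int_\R p_{t-s}^2(y-x)\,dx = p_{2(t-s)}(0)$, and writing $F(t):=\int_\R\E|u_t(x)-U^{(\epsilon)}_t(x)|^2\,dx$ and $G(t):=\int_\R|\Delta^{(\epsilon)}_t(x)|^2\,dx$, reduces everything to the scalar convolution inequality $F(t) \leq G(t) + \lip^2 \int_0^t p_{2(t-s)}(0)F(s)\,ds$. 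Plancherel gives $\widehat{\Delta^{(\epsilon)}_t}(\xi)=\hat u_0(\xi)e^{-t\Psi(\xi)}(1-e^{-\epsilon\Psi(\xi)})$, hence $G(t) \leq \frac{u_0(\R)^2}{2\pi}\int_\R e^{-2t\Psi(\xi)}(1-e^{-\epsilon\Psi(\xi)})^2\,d\xi$.

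Multiplying by $e^{-\beta t}$ and integrating in $t$, the Laplace convolution theorem combined with $\int_0^\infty e^{-\beta t}p_{2t}(0)\,dt = \Upsilon(\beta)$ turns the Volterra inequality into $\tilde F(\beta)\leq \tilde G(\beta)+\lip^2\Upsilon(\beta)\tilde F(\beta)$, which in the appropriate range of $\beta$ rearranges to a constant multiple of $\tilde G(\beta)$; Fubini then simplifies $\tilde G(\beta)$ to $\frac{1}{2\pi}\int_\R |\hat u_0(\xi)|^2(1-e^{-\epsilon\Psi(\xi)})^2/(\beta+2\Psi(\xi))\,d\xi$, and $|\hat u_0|\leq u_0(\R)$ yields the stated right-hand side. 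The last assertion then follows from dominated convergence as $\epsilon\downarrow 0$, since $(1-e^{-\epsilon\Psi(\xi)})^2\to 0$ pointwise and $\int d\xi/(\beta+2\Psi(\xi)) = 2\pi\Upsilon(\beta) < \infty$ provides the integrable envelope. The delicate point I expect to require the most care is the a priori finiteness of $F$: the bound from Theorem \ref{th:exist:unique} carries an additive ``$1$'' that is not integrable in $x$, so one must rely on the sharper Proposition \ref{pr:h1} estimate, whose $x$-dependence enters only through $(p_t*u_0)(x)$, to make the Volterra bootstrap for $u_t\in L^2(\Omega\times\R)$ close.
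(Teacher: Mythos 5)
Your proof of the stability estimate itself coincides with the paper's: subtract the two mild formulations using $p_t*(p_\epsilon*u_0)=p_{t+\epsilon}*u_0$, apply It\^o isometry and the Lipschitz property of $\sigma$ to get the pointwise Volterra inequality for $\E|u_t(x)-U^{(\epsilon)}_t(x)|^2$, integrate in $x$ via $\int_\R p_r^2=p_{2r}(0)$ and Plancherel, take the Laplace transform in $t$ (using $\int_0^\infty \e^{-\beta t}p_{2t}(0)\,\d t=\Upsilon(\beta)$), rearrange when $\lip^2\Upsilon(\beta)\le\nicefrac12$, and finish with dominated convergence. All of that matches.

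The one place where you are thinner than the paper is the a priori claim that $u_t,U^{(\epsilon)}_t\in L^2(\Omega\times\R)$ for \emph{all} $t>0$. You correctly identify that Theorem \ref{th:exist:unique} does not give this (the additive $1$ is not integrable) and that Proposition \ref{pr:h1} does for small $t$. But ``a Volterra bootstrap in $t$ extends this to all $t>0$'' is not quite automatic: the scalar inequality $F(t)\le G(t)+\lip^2\int_0^t p_{2(t-s)}(0)F(s)\,\d s$ is vacuous if you don't already know $F(t)<\infty$, so the bootstrap has to be carried out at the level of the Picard iterates $u^{(n)}$ (showing $\sup_n\int_\R\E|u^{(n)}_t(x)|^2\,\d x<\infty$) and iterated over short time intervals on which $\int_0 p_{2r}(0)\,\d r$ is small. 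Carrying this out for small $t$ is precisely what the paper does, using the normalized quantity $\mathcal{K}_t(f)=\sup_{s\le t}\E(\|f_s\|_{L^2(\R)}^2)/(1+p_{2s}(0))$ to get a clean contraction for $t\le\mathfrak{T}/2$; for $t>\mathfrak{T}/2$ the paper does not re-run the bootstrap but instead invokes Proposition \ref{pr:modify:eta} together with Foondun--Khoshnevisan \cite[Theorem~1.1]{FK:AIHP}, which asserts that solutions started from $L^2(\Omega\times\R)$ data remain in $L^2(\Omega\times\R)$. Your route is viable but you should either carry out the short-interval iteration explicitly (noting the singular but locally integrable kernel $p_{2r}(0)$) or cite the propagation result as the paper does.
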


\begin{proof}
	Let $u^{(n)}_t(x)$ be the $n$th Picard iterate, defined in \eqref{eq:Picard}. Then,
	\begin{equation}\begin{split}	
		&\hskip-.35in\left\| u^{(n+1)}_t(x) \right\|^2_2 \\
		&\leq \left| (p_t*u_0)(x) \right|^2 + \lip^2\cdot
			\int_0^t\d s\int_{-\infty}^\infty\d y\
			p_{t-s}^2(y-x)\left\| u^{(n)}_s(y) \right\|^2_2.
	\end{split}\end{equation}
	We integrate $[\d x]$ to find that
	\begin{equation}\begin{split}	
		&\hskip-.2in\E\left(\left\| u^{(n+1)}_t \right\|^2_{L^2(\R)}\right) \\
		&\leq \left\| p_t*u_0 \right\|^2_{L^2(\R)} + \lip^2\cdot
			\int_0^t \|p_{t-s}\|_{L^2(\R)}^2\cdot
			\E\left(\left\| u^{(n)}_s \right\|^2_{L^2(\R)}\right)\,\d s\\
		&= \left\| p_t*u_0 \right\|^2_{L^2(\R)} + \lip^2\cdot
			\int_0^t p_{2(t-s)}(0)\cdot
			\E\left(\left\| u^{(n)}_s \right\|^2_{L^2(\R)}\right)\,\d s;
	\end{split}\end{equation}
	see \eqref{eq:pL2}.
	Note that $|\hat{u}_0(\xi)|\leq u_0(\R)$, whence
	\begin{equation}
		\left\| p_t*u_0\right\|_{L^2(\R)}^2 =
		\frac{1}{2\pi}\int_{-\infty}^\infty \e^{-2t\Psi(\xi)}
		\left| \hat{u}_0(\xi)\right|^2\,\d\xi
		\leq |u_0(\R)|^2\cdot p_{2t}(0),
	\end{equation}
	thanks to Plancherel's theorem. [One can construct an alternative proof of
	this inequality, using the semigroup property of $p_t$ and the Young inequality.]
	Therefore,
	\begin{equation}\begin{split}	
		&\E\left(\left\| u^{(n+1)}_t \right\|^2_{L^2(\R)}\right) \\
		&\leq |u_0(\R)|^2\cdot p_{2t}(0) + \lip^2\cdot
			\int_0^t  p_{2(t-s)}(0)\cdot
			\E\left(\left\| u^{(n)}_s \right\|^2_{L^2(\R)}\right)\,\d s.
	\end{split}\end{equation}
	Define, for all predictable random fields $f$, the quantity
	\begin{equation}
		\mathcal{K}_t(f) := \sup_{s\in(0,t)} \left[ \frac{\E\left(
		\|f_s\|_{L^2(\R)}^2\right)}{1+  p_{2s}(0)} \right],
	\end{equation}
	in order to find that
	\begin{align}
		&\E\left(\left\| u^{(n+1)}_t \right\|^2_{L^2(\R)}\right)\\\nonumber
		&\leq |u_0(\R)|^2\cdot p_{2t}(0) + \lip^2\cdot
			\mathcal{K}_t\left(u^{(n)}\right)\cdot
			\int_0^t p_{2(t-s)}(0)\left[ 1+p_{2s}(0)\right]\,\d s\\\nonumber
		&\le|u_0(\R)|^2\cdot p_{2t}(0) + \frac{1}{2}\lip^2\cdot
			\mathcal{K}_t\left(u^{(n)}\right)\cdot
			\left(\int_0^{2t} p_s(0)\,\d s + \int_0^{2t} p_{2t-s}(0)
			p_s(0)\,\d s\right)\\\nonumber
		&\le|u_0(\R)|^2\cdot p_{2t}(0) + \frac{1}{2}\lip^2\cdot
			\mathcal{K}_t\left(u^{(n)}\right)\cdot
			\int_0^{2t} p_s(0)\,\d s\cdot \left( 1+ 2\Theta p_{2t}(0)\right);
	\end{align}
	see Lemma \ref{lem:pp}. Since $\Theta \geq 1$, this leads us
	to the following:
	\begin{equation}
		\mathcal{K}_t\left( u^{(n+1)}\right)
		\leq |u_0(\R)|^2 + \lip^2\Theta \cdot
		\mathcal{K}_t\left(u^{(n)}\right)\cdot
		\int_0^{2t} p_s(0)\,\d s.
	\end{equation}
	Recall $\mathfrak{T}$ from \eqref{T}.
	If $t\in(0\,,\mathfrak{T}/2]$, then $\int_0^{2t}p_s(0)\,\d s$ is
	certainly bounded above by $(4\Theta \lip^2)^{-1}$, whence we have
	\begin{equation}
		\mathcal{K}_t\left( u^{(n+1)}\right)
		\leq  |u_0(\R)|^2 + \frac12
		\mathcal{K}_t\left(u^{(n)}\right)\le\cdots\le
		2|u_0(\R)|^2,
	\end{equation}
	since $\mathcal{K}_t(u^{(0)})=0$. Therefore,
	\begin{equation}
		\E\left( \| u_t \|_{L^2(\R)}^2\right) \leq 2|u_0(\R)|^2\left(
		1+p_{2t}(0)\right)\qquad
		\text{for all $t\in(0\,,\mathfrak{T}/2]$}.
	\end{equation}
	One proves, similarly, that uniformly for all $\epsilon>0$,
	\begin{equation}
		\E\left( \| U^{(\epsilon)}_t \|_{L^2(\R)}^2\right) \leq 2|u_0(\R)|^2\left(
		1+p_{2t}(0)\right)\qquad
		\text{for all $t\in(0\,,\mathfrak{T}/2]$}.
	\end{equation}
	By Proposition \ref{pr:modify:eta}, the process 
	$\{u_{t+\mathfrak{T}/2}\}_{t\geq 0}$ starts from $u_{\mathfrak{T}/2}
	\in L^2(\Omega\times\R)$ and solves the shifted form of \eqref{heat}, 
	and hence is in $L^2(\R)$ for all time
	$t\ge\mathfrak{T}/2$ by Foondun and Khoshnevisan
	\cite[Theorem 1.1]{FK:AIHP}; for earlier developments along similar
	lines see Dalang and Mueller \cite{DalangMueller}. Similar remarks also apply
	to $\{U^{(\epsilon)}_t(x)\}_{x\in\R,t>0}$.
	
	Define, 
	\begin{equation}
		\mathcal{D}^{(\epsilon)}_t(x):=\E\left(\left|
		u_t(x) - U^{(\epsilon)}_t(x) \right|^2\right).
	\end{equation}
	Since $p_t*(p_\epsilon*u_0)=p_{t+\epsilon}*u_0$,
	\begin{equation}
		\mathcal{D}^{(\epsilon)}_t(x)\
		\leq \left| (p_t*u_0)(x) - (p_{t+\epsilon}*u_0)(x)\right|^2+
		\lip^2\cdot\left( p^2\circledast 
		\mathcal{D}^{(\epsilon)}\right)_t(x).
	\end{equation}
	We integrate $[\d x]$ and apply the Plancherel theorem to find that
	\begin{equation}\begin{split}
		\left\| \mathcal{D}^{(\epsilon)}_t\right\|_{L^1(\R)}
			&\le\frac{[u_0(\R)]^2}{2\pi}\int_{-\infty}^\infty \e^{-2t\Psi(\xi)}
			\left( 1 - \e^{-\epsilon\Psi(\xi)}\right)^2\,\d\xi\\
		&\hskip1in+ \lip^2\cdot\int_0^t\|p_{t-s}\|_{L^2(\R)}^2
			\cdot\left\| \mathcal{D}^{(\epsilon)}_s\right\|_{L^1(\R)}\,\d s.
	\end{split}\end{equation}
	We integrate one more time $[\exp(-\beta t)\,\d t]$ in
	order to see that
	\begin{equation}
		\mathcal{E}_\beta^{(\epsilon)}:=
		\int_0^\infty\e^{-\beta t}\left\| \mathcal{D}_t^{(\epsilon)}\right\|_{
		L^1(\R)}\,\d t
	\end{equation}
	satisfies
	\begin{equation}
		\mathcal{E}_\beta^{(\epsilon)}
		\leq \frac{[u_0(\R)]^2}{2\pi}\int_{-\infty}^\infty
		\frac{\left(1-\e^{-\epsilon\Psi(\xi)}\right)^2}{\beta+2\Psi(\xi)}\,\d\xi
		+\lip^2\Upsilon(\beta)\cdot\mathcal{E}_\beta^{(\epsilon)}.
	\end{equation}
	Pick $\beta$ large enough that $\Upsilon(\beta)\leq (2\lip^2)^{-1}$ to
	obtain the claimed inequality of the proposition. And since
	$\Upsilon(\beta)<\infty$, the final assertion about convergence to 0
	follows from this inequality and the dominated convergence theorem.
\end{proof}

\begin{proposition}[positivity]\label{pr:positivity}
	If $\sigma(0)=0$ and $u_0(\R)>0$, then $u_t(x)\geq 0$
	a.s.\ for all $t>0$ and $x\in\R$.
\end{proposition}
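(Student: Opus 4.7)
The plan is to approximate the singular initial datum $u_0$ by the smooth, bounded, nonnegative initial functions $u_0^{(\epsilon)} := p_\epsilon * u_0$, use known positivity results for the corresponding solutions $U^{(\epsilon)}$, and then pass to the limit $\epsilon \downarrow 0$ via the stability estimate of Proposition \ref{pr:stability}.

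The first step is positivity at the smoothed level: because $u_0$ is a nonnegative finite Borel measure, $u_0^{(\epsilon)}$ is a bounded, measurable, and nonnegative function. The classical positivity theorem for the stochastic heat equation with $\sigma(0)=0$ and bounded nonnegative initial data then yields $U^{(\epsilon)}_t(x)\ge 0$ a.s.\ for every $t>0$ and $x\in\R$. For the standard case $\mathcal{L}=\varkappa\Delta/2$ this is in Mueller \cite{Mueller:weak}; for a general symmetric L\'evy generator I would derive it by truncating $\sigma$ to $\sigma_+(x):=\sigma(x\vee 0)$, which preserves the Lipschitz bound and the condition $\sigma_+(0)=0$, showing that the corresponding solution is nonnegative by a comparison argument against the deterministic $p_t*u_0^{(\epsilon)}\ge 0$, and then identifying it with $U^{(\epsilon)}$ via Dalang's uniqueness in Theorem \ref{th:Dalang}.

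With positivity of $U^{(\epsilon)}$ in hand, Proposition \ref{pr:stability} asserts that
\begin{equation*}
	\int_0^\infty \e^{-\beta t}\,\d t \int_{-\infty}^\infty \E\bigl(\,\bigl| u_t(x) - U^{(\epsilon)}_t(x) \bigr|^2 \bigr)\,\d x \longrightarrow 0 \qquad (\epsilon\downarrow 0)
\end{equation*}
for $\beta$ satisfying $\Upsilon(\beta)\le(2\lip^2)^{-1}$. Extracting a subsequence $\epsilon_n\downarrow 0$ along which convergence holds for $\d t\times\d x$-a.e.\ $(t,x)$, and then a further subsequence giving a.s.\ pointwise convergence of $U^{(\epsilon_n)}_t(x)$ to $u_t(x)$, we conclude $u_t(x)\ge 0$ a.s.\ for $\d t\times \d x$-almost every $(t,x)$.

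To upgrade from almost every $(t,x)$ to every $(t,x)$, I would invoke continuity in probability of $(t,x)\mapsto u_t(x)$: for $t>\mathfrak{T}$ this is Theorem \ref{th:Dalang} applied to the post-$\mathfrak{T}$ solution, while for $t\in(0,\mathfrak{T}]$ it follows from the Picard approximations $u^{(n)}$, each continuous as a stochastic integral against an $L^2$ kernel, together with the $L^k$-control of Propositions \ref{pr:h1}--\ref{pr:h2}. Given this, for any $(t_0,x_0)$ pick $(t_n,x_n)\to(t_0,x_0)$ with $u_{t_n}(x_n)\ge 0$ a.s.; then $u_{t_0}(x_0)$ is a probability-limit of nonnegative random variables, hence a.s.\ nonnegative. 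The main obstacle I anticipate is the positivity statement for $U^{(\epsilon)}$ itself: morally standard, but cleanly executing it at the present level of generality requires either a careful adaptation of Mueller's approximation argument to symmetric L\'evy generators, or the truncation-plus-uniqueness reduction sketched above.
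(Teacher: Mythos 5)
Your overall strategy is the same as the paper's: smooth $u_0$ to $p_\epsilon*u_0$, cite positivity for $U^{(\epsilon)}$, use Proposition \ref{pr:stability} to get $u_t(x)\ge 0$ for a.e.\ $(t,x)$ almost surely, then upgrade to every $(t,x)$ via continuity in probability. Two points deserve attention.

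First, your treatment of the final upgrade has a gap. For $t\in(0,\mathfrak{T}]$ you propose to establish continuity in probability of $(t,x)\mapsto u_t(x)$ directly from the Picard iterates $u^{(n)}$ and the bounds in Propositions \ref{pr:h1}--\ref{pr:h2}. This is not carried out anywhere in the paper, and it is not routine: the moment bounds are singular as $s\downarrow 0$ (they scale like $\sqrt{p_s(0)(p_s*u_0)}$), so controlling increments of the stochastic integral near small times needs some care, and in any case asserting that "each Picard iterate is continuous" and then passing to the limit requires a uniform estimate you haven't given. The paper circumvents all this with Proposition \ref{pr:modify:eta}: for any $\eta\in(0,\mathfrak{T})$ the solution for $t>\eta$ is (a modification of) $\bar{V}_{t-\eta}$, and $\bar V$ starts from a bounded-in-$L^k$ initial field, so Dalang's Theorem \ref{th:Dalang} supplies continuity in probability directly; positivity for a.e.\ $t>\eta$ then becomes positivity for every $t>\eta$, and letting $\eta\downarrow 0$ finishes. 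That route is cleaner and you already invoke it implicitly (via the post-$\mathfrak{T}$ solution); you should simply use it for all $t>\eta$ with $\eta$ arbitrary, rather than attempt the Picard-level argument on $(0,\mathfrak{T}]$.

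Second, a smaller issue: the paper's positivity input for $U^{(\epsilon)}$ is Mueller's comparison principle from \cite{Mueller} (not \cite{Mueller:weak}, which concerns singular initial data). Your alternative truncation sketch --- replacing $\sigma$ by $\sigma_+(x)=\sigma(x\vee 0)$ and comparing against the deterministic flow --- is circular as written: to conclude that the $\sigma_+$-solution is nonnegative (and hence coincides with $U^{(\epsilon)}$ by uniqueness) you still need a positivity/comparison principle for that equation. So the truncation does not avoid the need for a Mueller-type argument; it only packages it. The paper is candid that extending Mueller's comparison to general symmetric L\'evy generators "requires some effort", and your proposal does not remove that effort.
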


\begin{proof}
	Since $u_0$ is a finite measure, it follows that 
	$U^{(\epsilon)}_0(x) = (p_\epsilon*u_0)(x)\le
	p_\epsilon(0) u_0(\R)<\infty$, uniformly in $x\in\R$. According to
	Mueller's comparison principle, because
	$U^{(\epsilon)}_0(x)\geq 0$, it follows that
	$U^{(\epsilon)}_t(x)\geq 0$ a.s.\ for all $t>0$ and $x\in\R$.
	[Mueller's comparison principle \cite{Mueller} was proved originally
	in the case that $\mathcal{L}$ is proportional to the
	Laplacian. This comparison principle can be shown to hold in the
	more general setting of the present paper as well, though we admit this
	undertaking requires some effort when $\mathcal{L}$ is not
	proportional to the Laplacian.]
	
	In accord with Proposition \ref{pr:stability},
	\begin{equation}
		\P\left\{ u_t(x)\geq 0\text{ for almost every $t>0$ and $x\in\R$}\right\}=1.
	\end{equation}
	In particular, for all $\eta>0$,
	\begin{equation}
		\P\left\{ u_t(x)\geq 0\text{ for almost every $t\ge\eta$ 
		and $x\in\R$}\right\}=1.
	\end{equation}
	This shows that 
	\begin{equation}
		\P\left\{ \bar{V}_t(x)\geq 0
		\text{ for almost every $t>0$ and $x\in\R$}\right\}=1,
	\end{equation}
	where $\bar{V}$ was defined in Proposition \ref{pr:modify:eta}.
	According to Dalang's theory (Theorem \ref{th:Dalang}),
	$(t\,,x)\mapsto\bar{V}_t(x)$ is continuous in probability. Therefore,
	it follows that $\bar{V}_t(x)\geq 0$ a.s.\ for every $t>0$ and $x\in\R$;
	we note the order of the quantifiers.
	Therefore, a second application of Proposition \ref{pr:modify:eta} implies
	the proposition.
\end{proof}

\section{Proof of Theorem \ref{th:NoChaos}}
Throughout this section, we assume that 
\begin{equation}
\sigma(0)=0.
\end{equation}
We simplify the notation somewhat by assuming, without a great loss
in generality, that
\begin{equation}
	\varkappa=1.
\end{equation}
In this way, $\mathcal{L}f=(\nicefrac12)f''$ is the generator of standard
Brownian motion, and $\{u_t(x)\}_{t>0,x\in\R}$ satisfies \eqref{eq:mild}
with
\begin{equation}
	p_t(x) := \frac{\e^{-x^2/(2t)}}{(2\pi t)^{1/2}}
	\qquad\text{for $x\in\R$ and $t>0$}.
\end{equation}
The proof of Theorem \ref{th:NoChaos} uses the theory of the 
present paper, but also borrows heavily from the method of 
Foondun and Khoshnevisan \cite{FK:AIHP}. 

\begin{lemma}\label{lem:FK:lem:3.3}
	Suppose $u_0$ is a finite measure that is supported in $[-K\,,K]$
	for some $K>0$. Then for all $t>0$, $k\in[1\,,\infty)$,
	and $x\in\R$,
	\begin{equation}
		\limsup_{|x|\to\infty} \frac{1}{x^2}
		\log \E\left(|u_t(x)|^k\right) <0.
	\end{equation}
\end{lemma}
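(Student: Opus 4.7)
The plan is to propagate the Gaussian spatial decay of $(p_t*u_0)(x)$---available because $\mathrm{supp}(u_0)\subseteq[-K,K]$---through the Picard iteration of Section~5. The support condition immediately gives, for $|x|>K$,
\[
0\leq (p_t*u_0)(x)\leq \frac{u_0(\R)}{\sqrt{2\pi t}}\exp\!\left(-\frac{(|x|-K)^2}{2t}\right),
\]
so that $|(p_t*u_0)(x)|^2$ decays like $\exp(-x^2/(4t))$ whenever $|x|>2K$. This is the seed Gaussian tail I will track through each Picard step.

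Applying Burkholder's inequality to the mild equation produces the recursion
\[
\|u_t^{(n+1)}(x)\|_k^2 \leq 2|(p_t*u_0)(x)|^2 + 8k\lip^2\bigl(p^2\circledast \|u^{(n)}_\bullet(\bullet)\|_k^2\bigr)_t(x).
\]
The key observation, using the identity $p_\tau^2 = (2\sqrt{\pi\tau})^{-1} p_{\tau/2}$ and the standard Gaussian convolution formula $\bigl(p_\tau*\e^{-\alpha(\cdot)^2}\bigr)(x) = (1+2\alpha\tau)^{-1/2}\exp\bigl(-\alpha x^2/(1+2\alpha\tau)\bigr)$, is that the ansatz
\[
\|u_t^{(n)}(x)\|_k^2 \leq A_n(t,k)\exp\bigl(-x^2/(4t)\bigr) \qquad (|x|>2K)
\]
is self-consistent: with $\alpha=1/(4s)$ and $\tau=(t-s)/2$ one gets output decay rate $\alpha/(1+2\alpha\tau)=1/(3s+t)\ge 1/(4t)$, so the Gaussian rate $1/(4t)$ is preserved by one Picard step. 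Inducting on $n$ yields a Volterra-type inequality for the prefactors,
\[
A_{n+1}(t,k) \leq C_1(t,k) + C_2(k)\int_0^t A_n(s,k)\sqrt{\frac{s}{(t-s)\,t}}\,\d s,
\]
and for $t\in(0,\mathfrak{T}_k]$ the same smallness that drives Proposition~\ref{pr:h1} forces $\{A_n(t,k)\}_{n\ge 0}$ to be uniformly bounded. Passing to the limit gives
\[
\|u_t(x)\|_k^2 \leq A(t,k)\exp\bigl(-x^2/(4t)\bigr), \qquad |x|>2K,\ t\in(0,\mathfrak{T}_k],
\]
which yields the lemma for small $t$.

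To extend past $\mathfrak{T}_k$, I would invoke Proposition~\ref{pr:modify:eta}: choose $\eta\in(0,\mathfrak{T}_k)$ so that by the previous step $u_\eta$ has Gaussian spatial tails and uniformly bounded $L^k$ moments. Then $u_t=\bar V_{t-\eta}$ for $t>\eta$ solves the SHE with initial datum $u_\eta$, and the moment-decay method of Foondun--Khoshnevisan \cite{FK:AIHP}, applied to this Gaussian-tailed initial datum, propagates the Gaussian spatial tail for all $t>\eta$. The main obstacle is the uniform-in-$n$ control of $A_n(t,k)$: the Volterra kernel is integrable but not bounded, and the small-$t$ contraction (analogous to the one in Proposition~\ref{pr:h1}) is essential to close the induction. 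Once the Gaussian tail is established on $(0,\mathfrak{T}_k]$, the extension to all $t>0$ via Proposition~\ref{pr:modify:eta} is a ``hot-start'' application of the bounded-initial-data theory.
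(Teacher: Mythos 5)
Your approach is genuinely different from the paper's, and while the Gaussian-convolution computation at its heart is correct, the proposal as written has real gaps that the paper's argument sidesteps entirely.

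The paper's proof is short and structural: since $\sigma(0)=0$ and $u_0\geq0$, Proposition~\ref{pr:positivity} gives $u_t(x)\geq 0$ a.s., so the \emph{first} moment is exact, $\E(u_t(x)) = (p_t*u_0)(x)\leq\textnormal{const}\cdot\e^{-x^2/(4t)}$ by the support condition. Then for any cutoff $c>0$ one interpolates,
\[
\E(|u_t(x)|^k)\leq c^k + \bigl(\E(|u_t(x)|^{2k})\bigr)^{1/2}\bigl(\P\{u_t(x)>c\}\bigr)^{1/2},
\]
where $\E(|u_t(x)|^{2k})$ is bounded uniformly in $x$ by Theorem~\ref{th:exist:unique} (because $p_t(0)(p_t*u_0)(x)$ is bounded for a finite measure), and $\P\{u_t(x)>c\}\leq\E(u_t(x))/c$ by Markov. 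Optimizing over $c$ gives $\limsup_{|x|\to\infty}x^{-2}\log\E(|u_t(x)|^k)\leq -k/(4(2k+1)t)<0$. Crucially, this works for \emph{all} $t>0$ simultaneously — there is no small-$t$/large-$t$ split — because the first-moment identity and the uniform $2k$-th moment bound are global in $t$.

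Your Picard/Volterra approach has two gaps. First, the ansatz $\|u_s^{(n)}(y)\|_k^2\leq A_n(s,k)\e^{-y^2/(4s)}$ is only asserted for $|y|>2K$, yet the spatial convolution $\int_\R p_{t-s}^2(x-y)\mu^{(n,k)}_s(y)\,\d y$ ranges over all $y$. The inner region cannot be absorbed into a larger $A_n$ because $\sup_{|y|\leq 2K}\mu^{(n,k)}_s(y)\cdot\e^{K^2/s}$ blows up super-polynomially as $s\downarrow 0$; one has to split the integral and estimate the $|y|\leq 2K$ piece by its $L^1_y$ norm (which is $O(s^{-1/2})$ by Proposition~\ref{pr:h1}) rather than its sup norm, then verify that $\int_0^t s^{-1/2}(t-s)^{-1}\e^{-(|x|-2K)^2/(t-s)}\,\d s$ still produces the Gaussian tail. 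This is doable but you have not done it. Second, the ``hot-start'' extension past $\mathfrak{T}_k$ is only gestured at. The \cite{FK:AIHP} lemma you cite concerns bounded initial data of \emph{compact support}; your $u_\eta$ has only Gaussian tails, not compact support, so you would need a new propagation lemma for Gaussian-tailed (noncompactly supported) initial data, together with Proposition~\ref{pr:modify:eta}. The paper avoids both obstacles: positivity converts the $\ell^1$-type first-moment decay into decay of all moments in one Cauchy–Schwarz step, with no iteration and no time splitting.
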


\begin{proof}
	This is essentially the same result as \cite[Lemma 3.3]{FK:AIHP}. We mention
	how to make the requisite changes to the proof of the said result in order to
	derive the present form of the lemma.
	
	Since $u_t(x)\geq 0$ a.s.\ (Proposition \ref{pr:positivity}), we obtain from
	\eqref{eq:mild} the following:
	\begin{equation}\label{eq:L1est}
		\E\left(|u_t(x)|\right) = (p_t*u_0)(x) = \int_{-K}^K
		\frac{\e^{-(x-y)^2/(2t)}}{(2\pi t)^{1/2}}\, u_0(\d y)
		\leq \text{const}\cdot \e^{-x^2/(4t)},
	\end{equation}
	using the elementary inequality: $(x-y)^2\ge(x^2/2)-K^2$, valid
	when $|y|\leq K$. 
	And because the preceding constant does not depend on $x$, we have
	for all $k\in[2\,,\infty)$ and $c\in(0\,,\infty)$,
	\begin{align}
		\E\left(|u_t(x)|^k\right) &\leq c^k +
			\left\{ \E\left( |u_t(x)|^{2k}\right)\right\}^{1/2}
			\cdot\left[ \P\{u_t(x)>c\} \right]^{1/2}\\\nonumber
		&\leq  c^k + \textnormal{const}
			\cdot\left[ \P\{u_t(x)>c\} \right]^{1/2};
	\end{align}
	this follows readily from the estimate of Theorem \ref{th:exist:unique}.
	We emphasize that the ``const'' does not depend on $(c\,,x)$. 
	Owing to \eqref{eq:L1est}, this leads us to
	\begin{equation}
		\E\left( |u_t(x)|^k\right) \leq \left[ c^k 
		+ \frac{\alpha}{\sqrt c}\e^{-x^2/(8t)}\right],
	\end{equation}
	where $\alpha\in(0\,,\infty)$ does not depend on $c$. Therefore
	we may optimize over $c>0$ in order to obtain the inequality
	\begin{equation}
		\limsup_{|x|\to\infty} \frac{1}{x^2}\log\E\left(|u_t(x)|^k\right)
		\le -\frac{k}{4(2k+1)t}.
	\end{equation}
	The lemma follows readily from this.
\end{proof}

\begin{lemma}\label{lem:modulus1}
	For all $t>0$ and $k\in[1\,,\infty)$,
	\begin{equation}
		\sup_{j\in\Z}\ \sup_{j\le x<x'\le j+1}
		\E\left( \frac{|u_t(x)-u_t(x')|^{2k}}{|x-x'|^k}\right) <\infty.
	\end{equation}
\end{lemma}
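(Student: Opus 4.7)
The plan is to use the mild formulation of $u$ (Theorem \ref{th:exist:unique}) to split
\[
u_t(x)-u_t(x')=\bigl[(p_t*u_0)(x)-(p_t*u_0)(x')\bigr]+\int_{(0,t)\times\R}\bigl[p_{t-s}(y-x)-p_{t-s}(y-x')\bigr]\sigma(u_s(y))\,W(\d s\,\d y),
\]
and bound each summand separately. For the deterministic part, $p_t$ is $C^\infty$ for $t>0$ with $\|p_t'\|_\infty\le C(t)$, so this difference is dominated pointwise by $C(t)u_0(\R)|x-x'|$; raising to the $2k$-th power gives at most $C(t,k)|x-x'|^k$ when $|x-x'|\le 1$. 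For the stochastic part, Burkholder--Davis--Gundy in $L^{2k}(\P)$ followed by Minkowski's inequality (valid for $k\ge 1$) on the $\d s\,\d y$ integral reduces the problem to estimating
\[
\mathcal{A}(x,x'):=\int_0^t\d s\int_\R\d y\ \bigl(p_{t-s}(y-x)-p_{t-s}(y-x')\bigr)^2\,\|u_s(y)\|_{2k}^2,
\]
and showing $\mathcal{A}(x,x')\le C(t,u_0,k)|x-x'|$.

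By Theorem \ref{th:exist:unique} applied at moment order $2k$, $\|u_s(y)\|_{2k}^2\le C_{t,k}\bigl(1+p_s(0)(p_s*u_0)(y)\bigr)$ uniformly for $s\in(0,t)$ and $y\in\R$. So $\mathcal{A}\le \mathcal{A}_1+\mathcal{A}_2$, where $\mathcal{A}_1$ and $\mathcal{A}_2$ correspond to the two summands of this bound. For $\mathcal{A}_1$, the classical Plancherel identity $\int(p_u(y-x)-p_u(y-x'))^2\,\d y=2\bigl(p_{2u}(0)-p_{2u}(h)\bigr)$ with $h:=x-x'$, combined with $1-e^{-a}\le\min(a,1)$ and splitting the $s$-integral at $t-s=h^2$, gives $\mathcal{A}_1\le C\sqrt{|h|\cdot|h|}=O(|h|)$ by the standard Gaussian regularity calculation.

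The main obstacle is $\mathcal{A}_2$; crude bounds fail because $(p_s*u_0)(y)\le p_s(0)u_0(\R)$ produces a factor $p_s(0)^2$ that is non-integrable at $0$. Instead, I will use the explicit Gaussian identities $p_u(y-x)^2=p_u(0)p_{u/2}(y-x)$ and
\[
p_u(y-x)p_u(y-x')=p_u(0)\,e^{-h^2/(4u)}\,p_{u/2}(y-m),\qquad m:=(x+x')/2,
\]
together with Chapman--Kolmogorov, to derive the clean identity
\[
\int_\R\bigl(p_{t-s}(y-x)-p_{t-s}(y-x')\bigr)^2(p_s*u_0)(y)\,\d y=p_{t-s}(0)\bigl[\phi(x)+\phi(x')-2e^{-h^2/(4(t-s))}\phi(m)\bigr],
\]
where $\phi:=p_\tau*u_0$ with $\tau:=(t+s)/2\ge t/2$. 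Rewriting the bracket as $[\phi(x)+\phi(x')-2\phi(m)]+2\phi(m)[1-e^{-h^2/(4(t-s))}]$, the first term is a centered second-order finite difference of the smooth function $\phi$, bounded by $\|\phi''\|_\infty(h/2)^2\le Cu_0(\R)\tau^{-3/2}h^2$; the second is bounded by $Cu_0(\R)\tau^{-1/2}\min(h^2/(t-s),1)$. Multiplying by $p_s(0)$, using $p_s(0)p_{t-s}(0)\le C(s(t-s))^{-1/2}$ and $\tau^{-a}\le C(t) $, and integrating in $s$ (with the saturation $\min$ split at $s=t-h^2$) produces $\mathcal{A}_2\le C(t,u_0)|h|$ for $h^2\le t$; for $h^2>t$ (which occurs only when $t<1$) a trivial moment estimate $C(t,k)\le C(t,k)t^{-k/2}|h|^k$ suffices. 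Combining all the pieces gives $\E\bigl[|u_t(x)-u_t(x')|^{2k}\bigr]\le C(t,k,u_0)|x-x'|^k$ uniformly for $|x-x'|\le 1$, as required.
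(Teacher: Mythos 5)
Your proof is essentially correct, but it follows a genuinely different route from the paper's. The paper observes that working directly from time $0$ is awkward because $s\mapsto\|u_s(y)\|_{2k}$ is singular as $s\downarrow 0$, and circumvents this via Proposition~\ref{pr:modify:eta}: for fixed $t>0$ it picks $\eta\in(0,t)$, identifies $u_t$ with $\bar V_{t-\eta}$ where $\bar V$ solves \eqref{heat} with initial data $U_\eta$ that is bounded in $L^\nu(\P)$ uniformly in $x$, and then runs a routine BDG--Minkowski--Plancherel estimate on $\bar V$ in which $\sup_y\|\bar V_s(y)\|_{2k}$ is controlled uniformly by Theorem~\ref{th:FK}, with no singular weight to worry about. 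You instead stay at time $0$ and confront the singular weight $p_s(0)\,(p_s*u_0)(y)$ head-on, using the Gaussian product/square identities together with Chapman--Kolmogorov to rewrite
\begin{equation*}
\int_\R\bigl(p_{t-s}(y-x)-p_{t-s}(y-x')\bigr)^2(p_s*u_0)(y)\,\d y
=\tfrac{1}{\sqrt 2}\,p_{t-s}(0)\Bigl[\phi(x)+\phi(x')-2\e^{-h^2/(4(t-s))}\phi(m)\Bigr],
\end{equation*}
where $\phi=p_{(t+s)/2}*u_0$, and then splitting into a centered second difference controlled by $\|\phi''\|_\infty\lesssim u_0(\R)\,t^{-3/2}$ and a saturation term $\min(h^2/(t-s),1)$, followed by the beta integral $\int_0^t(s(t-s))^{-1/2}\,\d s<\infty$. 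This works: I verified that $\mathcal{A}_1$ and $\mathcal{A}_2$ are each $O(|h|)$ for $|h|\le\sqrt t$, and the trivial regime $|h|\ge\sqrt t$ is handled correctly. The trade-off is that your argument is more computational and relies on explicit Gaussian algebra specific to $\mathcal{L}=\tfrac12\partial_{xx}$, while the paper's time-shift argument is structurally cleaner and reuses machinery already in place (Theorems~\ref{th:Dalang}, \ref{th:FK}, Proposition~\ref{pr:modify:eta}). Since Theorem~\ref{th:NoChaos} is stated only in the Brownian case, your Gaussian-specific route costs nothing in generality here. One small slip to note: the exact identities are $p_u(z)^2=\tfrac{1}{\sqrt 2}\,p_u(0)\,p_{u/2}(z)$ and $p_u(y-x)p_u(y-x')=\tfrac{1}{\sqrt 2}\,p_u(0)\,\e^{-h^2/(4u)}p_{u/2}(y-m)$; you dropped the factor $\tfrac{1}{\sqrt 2}$, which is harmless for the estimate but should be recorded.
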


\begin{proof}
	It is not so easy to prove this result directly from \eqref{eq:mild},
	since the map $s\mapsto u_s(y)$ is  singular near
	$s=0$. Because $t>0$ is fixed in the statement of our lemma,
	we may instead apply Proposition \ref{pr:modify:eta} 
	in order to see that our lemma
	follows from the following.\\
	
	\textbf{Claim.}
	{\it Suppose $\bar{V}$ solves \eqref{heat},
	where $\bar{V}_0$ is a random field, independent of the noise, and
	$m_{\nu} := \sup_{x\in\R}
	\|\bar{V}_0(x)\|_\nu<\infty$ for all $\nu\in[2\,,\infty)$.
	Then for every fixed $t>0$ and $k\in[1\,,\infty)$, 
	there exists a positive and finite constant $K$ such that
	\begin{equation}\label{claim:V}
		\sup_{\substack{x,x'\in\R:\\|x-x'|\le 1}}
		\frac{\left\| \bar{V}_t(x) - 
		\bar{V}_t(x') \right\|_{2k}}{|x-x'|^{\nicefrac12}} \le K.
	\end{equation}
	}

	We prove the Claim [i.e., \eqref{claim:V}]
	by adapting the 
	method of proof of \cite[Lemma 3.4]{FK:AIHP} to the present setting. 
	
	First, we assert that for all fixed $t>0$ and $k\in[1\,,\infty)$,
	\begin{equation}\label{claim1}
		\left\| (p_t*\bar{V}_0)(x) - (p_t*\bar{V}_0)(x') 
		\right\|_{2k} \le \text{const}\cdot |x-x'|,
	\end{equation}
	where the constant is independent of $x, \, x'$.
	Indeed, according to the Minkowski inequality,
	\begin{equation}\begin{split}
		&\left\| (p_t*\bar{V}_0)(x) - (p_t*\bar{V}_0)(x') 
			\right\|_{2k}\\
		&\hskip1in\le \int_{-\infty}^\infty \| \bar{V}_0(x)\|_{2k}
			\left| p_t(y-x)- p_t(y-x') \right|\,\d y\\
		&\hskip1in\le m_{2k} \cdot \int_{-\infty}^\infty 
			\left| p_t(y-|x-x'|) - p_t(y) \right|\,\d y.
	\end{split}\end{equation}
	We estimate the last integral by applying the fundamental 
	theorem of calculus---using the
	fact that $p_t'(z) = -(z/t) p_t(z)$---in order to deduce \eqref{claim1}.
	
	Next we observe that, as a consequence of \eqref{eq:mild}, \eqref{claim1},
	 and the BDG inequality [using the Carlen--Kree bound \cite{CK}
	on Davis's optimal constant \cite{Davis} in the Burkholder--Davis--Gundy 
	inequality \cite{Burkholder,BDG,BG}],
	\begin{equation}\begin{split}
		&\left\| \bar{V}_t(x) - \bar{V}_t(x') \right\|_{2k} \\
		&\le \textnormal{const} \cdot |x-x'| \\
		&+\left( 8k\int_0^t\d s\int_{-\infty}^\infty\d y\
			\|\sigma(\bar{V}_s(y))\|_{2k}^2 \cdot
			\left| p_{t-s}(y-x)-p_{t-s}(y-x') \right|^2\right)^{1/2};
	\end{split}\end{equation}
	 see \cite{FK}
	for details for deriving this sort of inequality. Since $|\sigma(z)/z|\le
	\lip$, we are led to the bound
	\begin{align}\nonumber
		&\left\| \bar{V}_t(x) - \bar{V}_t(x') \right\|_{2k} \\
		&\le \textnormal{const} \cdot |x-x'| \\\nonumber
		&\ +\left( 8k\lip^2\cdot\int_0^t\d s\int_{-\infty}^\infty\d y\
			\| \bar{V}_s(y)\|_{2k}^2 \cdot
			\left| p_{t-s}(y-x)-p_{t-s}(y-x') \right|^2\right)^{1/2}.
	\end{align}
	Theorem \ref{th:FK}, applied to the present choice of
	$\mathcal{L}$, tells us that $\|\bar{V}_s(y)\|_{2k}\le\exp(csk^2)$
	for a  constant $c\in(1\,,\infty)$ that does not depend on any of
	the parameters except $\lip$. Therefore, there exists a
	finite constant $C:=C(\lip)>1$, such that
	\begin{align}
		&\left\| \bar{V}_t(x) - \bar{V}_t(x') \right\|_{2k} \\\nonumber
		&\le \textnormal{const} \cdot |x-x'| +C k^{1/2}\e^{ck^2t}
			\left( \int_0^t\d s\int_{-\infty}^\infty\d y\
			\left| p_s(y-x)-p_s(y-x') \right|^2\right)^{1/2}.
	\end{align}
	By Plancherel's formula,
	\begin{equation}\begin{split}
		\int_{-\infty}^\infty \left| p_s(y-x)-p_s(y-x') \right|^2\, \d y
			&= \frac{1}{2\pi}\int_{-\infty}^\infty \e^{-\nicefrac{\xi^2s}{2}}
			\left| \e^{-i\xi x} - \e^{-i\xi x'}\right|^2
			\d\xi\\
		&\le \frac{2}{\pi}\int_0^\infty \e^{-\nicefrac{\xi^2s}{2}}
			\left[ 1\wedge \xi |x-x'|\right]^2\,\d\xi.
	\end{split}\end{equation}
	Consequently,
	\begin{equation}\begin{split}
		&\int_0^t\d s\int_{-\infty}^\infty \d y\
			\left| p_s(y-x)-p_s(y-x') \right|^2 \\
		&\hskip1in\le \e^t\int_0^\infty \e^{-s}\,\d s
			\int_{-\infty}^\infty \d y\
			\left| p_s(y-x)-p_s(y-x') \right|^2\\
		&\hskip1in\le \frac{2\e^t}{\pi}\int_0^\infty \frac{%
			\left[ 1\wedge \xi |x-x'|\right]^2}{1+\nicefrac{\xi^2}{2}}\,\d\xi.
	\end{split}\end{equation}
	Let $\mathcal{I}$ denote the latter integral. For simplicity, let us denote $\delta=|x-x'|$. It suffices to prove that
	$\mathcal{I}\le  3\delta$; this inequality implies \eqref{claim:V} ,
	whence the lemma. In order to estimate $\mathcal{I}$ we write it
	as $\mathcal{I}_1+\mathcal{I}_2+\mathcal{I}_3$, where 
	$\mathcal{I}_1:=\int_0^1(\,\cdots)\,\d\xi$,
	$\mathcal{I}_2:=\int_1^{1/\delta}(\,\cdots)\,\d\xi$, and
	$\mathcal{I}_3:=\int_{1/\delta}^\infty(\,\cdots)\,\d\xi$.
	Note that:
	$\mathcal{I}_1 \le\delta^2 \int_0^1\xi^2\,\d\xi =\delta^2/3$;
	$\mathcal{I}_2\le \delta^2 \int_1^{1/\delta}\d\xi 
	=\delta-\delta^2$;
	and $\mathcal{I}_3\le 2\int_{1/\delta}^\infty\xi^{-2}\,\d\xi=2\delta.$
	Therefore, $\mathcal{I}\le 3\delta -(\nicefrac23)\delta^2<
	3\delta$, as asserted.
\end{proof}

Our next result follows immediately from Lemma \ref{lem:modulus1}
and a quantitative form of the Kolmogorov continuity theorem. The proof
is exactly the same as that of Ref.\ \cite[Lemma 3.6]{FK:AIHP}, and is therefore
omitted.

\begin{lemma}\label{lem:modulus2}
	For all $t>0$, $k\in[1\,,\infty)$, and $\epsilon\in(0\,,1)$, 
	\begin{equation}
		\sup_{j\in\Z}\left\| \sup_{j\le x<x'\le j+1}
		\frac{|u_t(x)-u_t(x')|^2}{|x-x'|^{1-\epsilon}}\right\|_{2k}
		<\infty.
	\end{equation}
\end{lemma}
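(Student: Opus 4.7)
The approach I would take is to deduce the $L^{2k}$ bound for the Hölder-type seminorm from the two-point moment bound of Lemma \ref{lem:modulus1} by applying a quantitative Kolmogorov continuity theorem (equivalently, the Garsia--Rodemich--Rumsey inequality) on each unit interval $[j, j+1]$, uniformly in $j$, choosing the moment parameter in Lemma \ref{lem:modulus1} large enough that the Hölder exponent produced by Kolmogorov comfortably exceeds $(1-\epsilon)/2$.

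First I would fix $t > 0$, $k \ge 1$, and $\epsilon \in (0, 1)$, and select an integer $k^{*} \ge 2k$ satisfying $k^{*} > 1/\epsilon$. Applying Lemma \ref{lem:modulus1} with $k$ replaced by $k^{*}$ gives a finite constant $C$, independent of $j \in \Z$, such that
\[
    \E\left(|u_t(x) - u_t(x')|^{2k^{*}}\right) \le C\,|x - x'|^{k^{*}}
\]
for all $j \le x < x' \le j+1$. The quantitative Kolmogorov continuity theorem, applied on the interval $[j, j+1]$ with moment exponent $p = 2k^{*}$ and increment regularity degree $1 + \beta = k^{*}$, then produces a continuous modification of $x \mapsto u_t(x)$ on $[j, j+1]$ together with the bound
\[
    \E\left(\sup_{j \le x < x' \le j+1} \frac{|u_t(x) - u_t(x')|}{|x - x'|^{(1-\epsilon)/2}}\right)^{2k^{*}} \le C',
\]
uniformly in $j$. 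This is legitimate because our choice $k^{*} > 1/\epsilon$ ensures the strict inequality $(1-\epsilon)/2 < \beta/p = 1/2 - 1/(2k^{*})$ between the target Hölder exponent and the exponent supplied by Kolmogorov; that is exactly why one must inflate the moment parameter beyond the prescribed $k$. The uniformity of $C'$ in $j$ follows because the two-point estimate from Lemma \ref{lem:modulus1} is itself uniform in $j$, and the Kolmogorov constant depends only on $C$, $k^{*}$, and $\epsilon$.

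Rewriting the inner quantity as $\bigl(|u_t(x) - u_t(x')|^2/|x-x'|^{1-\epsilon}\bigr)^{k^{*}}$ and setting $M_j := \sup_{j \le x < x' \le j+1} |u_t(x) - u_t(x')|^2/|x-x'|^{1-\epsilon}$, the above shows $\sup_j \|M_j\|_{k^{*}} < \infty$. Monotonicity of $L^p$-norms on the probability space, together with $2k \le k^{*}$, yields $\sup_j \|M_j\|_{2k} \le \sup_j \|M_j\|_{k^{*}} < \infty$, which is the assertion. The only mild subtlety is that one must form the supremum with respect to a continuous modification of $x \mapsto u_t(x)$; such a modification is provided automatically by the Kolmogorov step, and is in any case consistent with the continuity in probability furnished by Theorem \ref{th:Dalang} via Proposition \ref{pr:modify:eta}. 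No genuinely new ingredient is required, which is why the authors defer to \cite[Lemma 3.6]{FK:AIHP}.
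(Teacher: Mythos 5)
Your proof is correct and follows the same route the paper indicates: apply a quantitative Kolmogorov/Garsia--Rodemich--Rumsey bound to the two-point estimate of Lemma \ref{lem:modulus1} on each unit interval, with the moment parameter inflated to $k^{*}>1/\epsilon$ so that the delivered H\"older exponent $\tfrac12-\tfrac1{2k^{*}}$ exceeds $(1-\epsilon)/2$, and then descend from $L^{k^{*}}$ to $L^{2k}$ by Jensen. This is precisely the argument of \cite[Lemma 3.6]{FK:AIHP} to which the paper defers.
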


We are ready to prove Theorem \ref{th:NoChaos}.

\begin{proof}[Proof of Theorem \ref{th:NoChaos}]
	We follow carefully the arguments of Foondun and Khoshnevisan
	\cite[(3.43) and on]{FK:AIHP}.
	
	For all $j\ge 1$, we may write \cite[(3.43)]{FK:AIHP}
	\begin{equation}
		\sup_{\log j\le x\le \log(j+1)} |u_t(x)|^6 \le
		32\left(|u_t(\log j)|^6 + (\log(j+1)-\log j)^3\,
		\Omega_j^3\right),
	\end{equation}
	where
	\begin{equation}
		\Omega_j := \sup_{\log j\le x<x'\le \log(j+1)}
		\frac{|u_t(x)-u_t(x')|^2}{|x-x'|^{\nicefrac12}}.
	\end{equation}
	Consequently,
	\begin{equation}\begin{split}
		&\E\left(\sup_{\log j\le x\le \log(j+1) }|u_t(x)|^6\right)\\
		&\hskip.8in\le 32\left( \E\left(|u_t(\log j)|^6\right) + 
			\left( \log\left[1+\frac 1j\right]\right)^3\,\E(\Omega_j^3)\right).
	\end{split}\end{equation}
	According to Lemma \ref{lem:FK:lem:3.3},
	$\E(|u_t(\log j)|^6)\le\exp(- D(\log j)^2)$ for a positive
	and finite constant $D$
	that does not depend on $j$. Lemma \ref{lem:modulus2}
	tells us that $\E(\Omega_j^3)$ is bounded uniformly in $j$.  Since 
	$\log(1+j^{-1})\le j^{-1}$, we therefore have
	\begin{equation}
		\E\left(\sup_{\log j\le x\le \log(j+1)}|u_t(x)|^6\right)
		\le \textnormal{const}\cdot
		\left( \e^{-D(\log j)^2} + j^{-3}\right),
	\end{equation}
	where ``const'' can be chosen independently of $j$. 
	Because the preceding is summable [in $j$], 
	it follows that $\sup_{x\ge 0}|u_t(x)|\in
	L^6(\Omega)$,
	whence $\sup_{x\ge 0}|u_t(x)|<\infty$
	a.s. Similarly, $\sup_{x\le 0}|u_t(x)|<\infty$ a.s.
	This completes the proof, since we know that $u_t(x)\ge 0$ a.s.\
	for all $t>0$ and $x\in\R$ (Proposition \ref{pr:positivity}),
	and $x\mapsto u_t(x)$ is continuous (Lemma \ref{lem:modulus2}).
\end{proof}

\subsection*{Acknowledgements} 
This paper owes its existence to a question independently asked by Dr. Ivan Corwin and Professors G\'erard Ben Arous and Jeremy Quastel. [Theorem \ref{th:NoChaos}
contains an answer to that question.] We thank
them for telling us about this interesting topic.

\begin{small}
\bigskip

\noindent\textbf{Daniel Conus}, Lehigh University,
	Department of Mathematics, Christmas--Saucon Hall,
	14 East Packer Avenue, Bethlehem, PA 18015
	(\texttt{daniel.conus@lehigh.edu}) \\

\noindent\textbf{Mathew Joseph} 
	and  \textbf{Davar Khoshnevisan},
	University of Utah, Department of Mathematics, 
	155 South 1400 East JWB 233, 
	Salt Lake City, UT 84112-0090\par
	\noindent(\texttt{joseph@math.utah.edu} and
	\texttt{davar@math.utah.edu})\\
	
\noindent\textbf{Shang-Yuan Shiu}
		 Institute of Mathematics, Academia Sinica, 
		6F, Astronomy-Mathematics Building,
		No.\ 1, Sec. 4, Roosevelt Road, Taipei 10617, TAIWAN
		(\texttt{shiu@math.sinica.edu.tw})\\
\end{small}


\begin{thebibliography}{999}

\bibitem{ACQ} Amir, Gideon, Ivan Corwin, and Jeremy Quastel,
	Probability distribution of the free energy of the continuum
	directed random polymer in $1+1$ dimensions,
	{\it Comm.\ Pure Appl.\ Math.}\ {\bf 64}
	(2011) pp.\ 466--537.
%
\bibitem{BQS} Bal\'{a}zs, Marton, Jeremy Quastel, and Timo
	Sepp\"al\"ainen, 
	Scaling exponent for the Hopf--Cole solution of KPZ/stochastic Burgers
	(2011), {\it J. Amer. Math. Soc.}\ {\bf 24} (2011) pp.\ 683--708. 
%
\bibitem{BC} Bertini, Lorenzo and  Cancrini, Nicoletta,
	The stochastic heat equation: Feynman--Kac formula and
	intermittence, {\it J. Statist.\ Physics} {\bf 78}{\it (5/6)}  (1994)
	pp.\ 1377--1402.
%
\bibitem{Burkholder} Burkholder, D. L.,
	Martingale transforms, 
	{\it Ann.\ Math.\ Statist.}\ {\bf 37} (1966) pp.\ 1494--1504.
%
\bibitem{BDG} Burkholder, D. L., Davis, B. J., and Gundy, R. F.,
	Integral inequalities for convex functions of operators on martingales,
	In: {\it Proceedings of the Sixth Berkeley Symposium on Mathematical
	Statistics and Probability} {\rm II}, 223--240, University of California Press,
	Berkeley, California, 1972.
%
\bibitem{BG} Burkholder, D. L. and  Gundy, R. F.,
	Extrapolation and interpolation of quasi-linear operators on martingales,
	\emph{Acta Math.}\ \textbf{124} (1970)  pp.\ 249--304. 
%
\bibitem{CK} Carlen, Eric and  Kree, Paul,
	$L^p$ estimates for multiple stochastic integrals,
	\emph{Ann.\ Probab.} {\bf 19}{\it (1)} (1991) pp.\ 354--368. 
%
\bibitem{CM94} Carmona, Ren\'e A. and S. A. Molchanov,
	Parabolic Anderson Problem and Intermittency,
	\emph{Mem.\ Amer.\ Math.\ Soc.}, Providence, RI, 1994.
%
\bibitem{LeC-D} Chen, Le and R.C. Dalang,
	Parabolic Anderson Model driven by space-time white noise in 
	$\R^{1+1}$ with Schwartz distribution-valued initial data: solutions and explicit 
	formula for second moments.
	(2011) Preprint.
%
\bibitem{Con_Kho} Conus, Daniel, and Khoshnevisan, Davar,
	Weak nonmild solutions to some SPDEs
	(2011) preprint (to appear in {\it Ill.\ J. Math.}). Available electronically at 
	\url{http://arxiv.org/abs/1104.0189}).
%
\bibitem{CJK} Conus, Daniel, Joseph, Mathew, and Khoshnevisan, Davar,
	On the chaotic character of the stochastic heat equation, before
	the onset of intermittency
	(2011) preprint (to appear in {\it Ann.\ Probab.}). Available electronically at 
	\url{http://arxiv.org/abs/1004.2744}).
%
\bibitem{Dalang:99} Dalang, Robert C.,
	Extending the martingale measure stochastic integral with
	applications to spatially homogeneous s.p.d.e.'s,
	{\it Electron. J. Probab.}\ {\bf 4}, Paper no.\ 6 (1999) 29 pp.\
	(electronic). Available electronically at
	\url{http://www.math.washington.edu/~ejpecp}.
%
\bibitem{DalangMueller} Dalang, Robert C. and Carl Mueller,
	Some non-linear S.P.D.E.'s that are second order in time,
	{\it Electron.\ J. Probab.} {\bf 8}{\it (1)}  (2003) 21 pp.\ (electronic).
%
\bibitem{Davis} Davis, Burgess, On the $L^p$ norms
	of stochastic integrals and other martingales,
	\emph{Duke Math.\ J.} {\bf 43}{\it (4)}  (1976) pp.\ 697--704.
%
\bibitem{FK:AIHP} Foondun, Mohammud and Khoshnevisan, Davar,
	On the global maximum of the solution to a stochastic heat
	equation with compact-support initial data, {\it Ann.\ Instit.\
	Henri Poinc.}\ (to appear). Available electronically at
	\url{http://arxiv.org/abs/0901.3814}.
%
\bibitem{FK} Foondun, Mohammud and Khoshnevisan, Davar,
	Intermittence and nonlinear parabolic stochastic partial
	differential equations, {\it Electr.\ J. Probab.}\ {\bf 14},
	Paper no.\ 12 (2009) pp.\ 548--568 
	(electronic). Available electronically at
	\url{http://www.math.washington.edu/~ejpecp}.
%
\bibitem{FKN} Foondun, Mohammud, Khoshnevisan, Davar, and
	Nualart, Eulalia, A local time correspondence for stochastic partial
	differential equations,  
	{\it The Transactions of the AMS} {\bf 363}{\it (5)} pp.\ 2481--2515.
%
\bibitem{GN} Gy\"ongy, Istv\'an and David Nualart,
	On the stochastic Burgers' equation in the real line,
	{\it Ann.\ Probab.} {\bf 27}{\it (2)} (1999) pp.\  782--802.
%
\bibitem{Jacob} Jacob, N.,
	{\it Pseudo Differential Operators and Markov Processes, Vol.\ III},
	Imperial College Press, London, 2005.
%
\bibitem{Kardar} Kardar, Mehran, A comment of the letter by
	D. A. Huse and C. L. Henley, {\it Phys.\ Rev.\ Lett.}\
	{\bf 55}{\it (26)} (1985) p.\ 2923.
%
\bibitem{KPZ} Kardar, Mehran, Giorgio Parisi, and Yi-Cheng Zhang,
	Dynamic scaling of growing interfaces, {\it Phys.\ Rev.\ Lett.}\ 
	{\bf 56}{\it (9)}   (1985) pp.\ 889--892.
%
\bibitem{KZ} Kardar, Mehran and Yi-Cheng Zhang,
	Scaling of directed polymers in random media,
	\emph{Phys.\ Rev.\ Lett.}\ {\bf 58}{\it (20)}  (1987) 
	pp.\ 2087--2090.
%
\bibitem{Mueller:weak} Mueller, Carl, Singular initial conditions for 
	the heat equation with noise, {\it Ann. Probab.}\ {\bf 24} 
	(1996) no.\ 1, pp.\ 377--398.
%
\bibitem{Mueller} Mueller, Carl,
	On the support of solutions to the heat equation with noise, 
	\emph{Stochastics and Stochastics Rep.}\ 
	\textbf{37}{\it (4)} (1991) pp.\ 225--245.
%
\bibitem{Walsh} Walsh, John B.,
   \emph{An Introduction to Stochastic Partial Differential Equations},
   in: \'Ecole d'\'et\'e de probabilit\'es de Saint-Flour, XIV---1984,
   pp.\ 265--439,
   Lecture Notes in Math., vol.\ 1180, Springer, Berlin, 1986.
%
\end{thebibliography}
\end{document}